\newcommand{\st}{\ : \ }
\renewcommand{\leq}{\leqslant}
\renewcommand{\geq}{\geqslant}
\newcommand{\R}{\mathbf{R}}
\newcommand{\C}{\mathbb{C}}
\newcommand{\M}{\mathsf{M}}
\DeclareMathOperator{\card}{\mathrm{card}}
\DeclareMathOperator{\mathspan}{\mathrm{span}}
\DeclareMathOperator{\aff}{\mathrm{aff}}
\DeclareMathOperator{\co}{\mathcal{C}}
\newcommand{\scalar}[2]{\langle #1 , #2\rangle}
\newcommand{\ketbra}[2]{| #1 \rangle \langle  #2 |}
\newcommand{\ket}[1]{| #1 \rangle}
\theoremstyle{plain}
\newtheorem{theorem}{Theorem}
\newtheorem{proposition}{Proposition}
\newtheorem{lemma}[theorem]{Lemma}
\theoremstyle{remark}
\title{Maximal exponent of the Lorentz cones}
\author{Guillaume Aubrun}
\address{\small{Institut Camille Jordan, Universit\'{e} Claude Bernard Lyon 1, 43 boulevard du 11 novembre 1918, 69622 Villeurbanne cedex, France}}
\email{aubrun@math.univ-lyon1.fr}
\author{Jing Bai}
\address{\small{School of Mathematics, Harbin Institute of Technology, 92 West Dazhi Street, Nangang District, 150001 Harbin, China
\newline{and Institut Camille Jordan, Universit\'{e} Claude Bernard Lyon 1, 43 boulevard du 11 novembre 1918, 69622 Villeurbanne cedex, France
}}}
\email{jingb@stu.hit.edu.cn}
\keywords{Lorentz cone, maximal exponent, quantum Wielandt inequality}
\subjclass[2020]{52A20, 51M04}
\begin{document}

\begin{abstract} 
We show that the maximal exponent (i.e., the minimum number of iterations required for a primitive map to become strictly positive) of the $n$-dimensional Lorentz cone is equal to $n$. As a byproduct, we show that the optimal exponent in the quantum Wielandt inequality for qubit channels is equal to $3$.
\end{abstract}

% Use the \maketitle command after the abstract
\maketitle

Our main object of study is the $n$-dimensional \emph{Lorentz cone} (also known as second-order cone, quadratic cone, or ice-cream cone), which is the cone $\mathcal{L}_{n} \subset \R^n$ defined as
\[ \mathcal{L}_{n} = \left\{ (x_1,\dots,x_n) \in \R^n \st x_n \geq \left( x_1^2 + \dots + x_{n-1}^2 \right)^{1/2} \right\} .\]
We denote by $\mathrm{int}(\mathcal{L}_n)$ the interior of $\mathcal{L}_n$. We say that a linear map $\Psi : \R^n \to \R^n$ is \emph{positive} if $\Psi(\mathcal{L}_n) \subset \mathcal{L}_n$, \emph{strictly positive} if $\Psi(\mathcal{L}_n \setminus \{0\}) \subset \mathrm{int}(\mathcal{L}_n)$ and \emph{primitive} if it is positive and if there exists an integer $k \geq 1$ such that $\Psi^k$ is strictly positive. If $\Psi$ is primitive, the smallest such $k$ is called the \emph{primitivity index} of $\Psi$ and denoted~$\gamma(\Psi)$.
The main result of this paper is the following theorem.

\begin{theorem} \label{theorem:main}
Let $n \geq 1$. If $\Psi : \R^n \to \R^n$ is primitive, then $\gamma(\Psi) \leq n$. Moreover, there is a primitive map $\Psi : \R^n \to \R^n$ such that $\gamma(\Psi)=n$.
\end{theorem}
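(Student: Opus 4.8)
I would argue by induction on $n$ (the case $n=1$ being trivial since $\partial\mathcal{L}_1\setminus\{0\}=\emptyset$). First I record the structural features of $\mathcal{L}_n$ I will lean on: its only proper faces are $\{0\}$ and the extreme rays $\mathbf{R}_{\ge 0}(u,1)$ with $|u|=1$, and the supporting hyperplane at such a boundary point touches $\mathcal{L}_n$ only along that ray (the base of $\mathcal{L}_n$ is a strictly convex body). From this one gets, for a primitive $\Psi$ and $n\ge 2$, that $\Psi(\mathrm{int}\,\mathcal{L}_n)\subseteq\mathrm{int}\,\mathcal{L}_n$ and $\Psi(\mathcal{L}_n\setminus\{0\})\subseteq\mathcal{L}_n\setminus\{0\}$: otherwise the image of $\Psi$ would lie in a single line (so $\mathrm{rank}\,\Psi\le 1$, never primitive) or $\Psi$ would kill a cone vector (killing strict positivity of every power). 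Hence for $x\in\partial\mathcal{L}_n\setminus\{0\}$ the exit time $\tau(x):=\min\{j\ge 0:\Psi^j x\in\mathrm{int}\,\mathcal{L}_n\}$ is finite, the orbit never returns to the boundary after leaving it, and $\gamma(\Psi)=\max_{x}\tau(x)$. So it suffices to prove, for an arbitrary positive map $\Psi$ on $\mathcal{L}_n$: if $x_0\in\partial\mathcal{L}_n\setminus\{0\}$ and the orbit of $x_0$ ever enters $\mathrm{int}\,\mathcal{L}_n$, then $\tau(x_0)\le n$; equivalently, one cannot have $x_0,x_1,\dots,x_n$ (with $x_{j+1}=\Psi x_j$) all on $\partial\mathcal{L}_n$. (One checks first that such points lie on pairwise distinct extreme rays, since a repetition produces a boundary eigenvector of some power of $\Psi$ and forces the orbit to stay on the boundary forever.)

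For the inductive step, suppose such a chain $x_0,\dots,x_n$ exists; these are $n+1$ vectors in $\mathbf{R}^n$, hence dependent. Let $V$ be the $\Psi$-cyclic subspace generated by $x_0$; it contains the whole orbit, and since it contains the interior point $\tfrac12(x_0+x_1)$, the section $C_V=\mathcal{L}_n\cap V$ is linearly isomorphic to a Lorentz cone $\mathcal{L}_{\dim V}$ with $\partial C_V=\partial\mathcal{L}_n\cap V$, on which $\Psi|_V$ is positive and has the same exit time for $x_0$. If $\dim V<n$ the induction hypothesis yields $\tau(x_0)\le\dim V<n$, a contradiction. So $x_0,\dots,x_{n-1}$ is a basis of $\mathbf{R}^n$, and this is the heart of the matter. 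Here I use the $S$-procedure description of positivity: with $J=\mathrm{diag}(-1,\dots,-1,1)$, a linear map is positive iff $\Psi^\top J\Psi\succeq\beta J$ for some $\beta\ge 0$ (together with $\Psi$ future-directed), and if $\mathrm{rank}\,\Psi\ge 2$ one may take $\beta>0$ --- otherwise $\Psi^\top J\Psi$ is positive semidefinite, forcing $\mathrm{Im}\,\Psi$ into $\{z:z^\top Jz\ge 0\}$, which contains no $2$-dimensional subspace. Since $z\in\partial\mathcal{L}_n\setminus\{0\}$ exactly when $z\ne0$ and $z^\top Jz=0$, for $j=0,\dots,n-1$ both $x_j$ and $x_{j+1}=\Psi x_j$ are null, so $x_j^\top(\Psi^\top J\Psi-\beta J)x_j=\langle x_{j+1},Jx_{j+1}\rangle-\beta\langle x_j,Jx_j\rangle=0$; as $\Psi^\top J\Psi-\beta J$ is positive semidefinite this gives $(\Psi^\top J\Psi-\beta J)x_j=0$ for all $j$, and since the $x_j$ span $\mathbf{R}^n$ we conclude $\Psi^\top J\Psi=\beta J$, i.e.\ $\Psi/\sqrt\beta\in\mathsf{O}(n-1,1)$ is an orthochronous Lorentz transformation --- an automorphism of $\mathcal{L}_n$. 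But then every $\Psi^k$ is a scalar multiple of an automorphism, maps $\partial\mathcal{L}_n$ onto $\partial\mathcal{L}_n$, and is never strictly positive, contradicting primitivity. This closes the induction. I expect this last step --- that $n+1$ consecutive boundary points spanning $\mathbf{R}^n$ force $\Psi$ to be a Lorentz transformation --- to be the crux; the $S$-procedure characterization of positive maps on $\mathcal{L}_n$ is the tool that makes it work, and primitivity enters precisely in guaranteeing $\beta>0$ (equivalently, in ruling out rank $\le 1$).

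\textbf{Plan for the extremal example.} To attain the bound I would build, for each $n$, a primitive $\Psi$ with a boundary chain of length $n$. Pick unit vectors $u_0,\dots,u_{n-1}\in\mathbf{R}^{n-1}$ with constant pairwise inner product (e.g.\ vertices of a regular simplex on the sphere) and set $x_j=(u_j,1)\in\partial\mathcal{L}_n$, so that the Lorentzian Gram matrix $\big(\langle x_j,Jx_k\rangle\big)$ is symmetric, hollow, Toeplitz with positive off-diagonal entries, hence of signature $(1,n-1)$. Let $w$ be a suitably scaled vector with $w^\perp=\mathrm{span}(x_0,\dots,x_{n-2})$ (so $\langle w,x_j\rangle=0$ for $j\le n-2$ and $\langle w,x_{n-1}\rangle\ne0$), and define $\Psi$ by $\Psi x_j=x_{j+1}$ for $0\le j\le n-2$ and by requiring $\Psi^\top J\Psi=\beta J+ww^\top$ for the appropriate $\beta>0$; the ($\beta$-weighted) Toeplitz structure of the Gram matrix is exactly what makes this prescription consistent, and it pins down $\Psi x_{n-1}$ up to a discrete, future-directed choice. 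Then $\Psi^\top J\Psi=\beta J+ww^\top\succeq\beta J$, so $\Psi$ is positive; in the basis $(x_0,\dots,x_{n-1})$ it is a companion matrix, and from $(\Psi^k)^\top J\Psi^k=\beta^k J+\sum_{i<k}\beta^{\,k-1-i}(\Psi^i)^\top ww^\top\Psi^i$ together with the fact that the only $\Psi$-invariant subspace contained in $w^\perp$ is $\{0\}$ (immediate from the companion form), one checks that $\Psi$ is primitive. Finally $x_0,\dots,x_{n-1}$ is a length-$n$ boundary chain, so $\tau(x_0)\ge n$ and hence $\gamma(\Psi)\ge n$; with the upper bound this gives $\gamma(\Psi)=n$. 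The computations here --- the consistent solvability for $\Psi x_{n-1}$ and the primitivity check --- are routine but need some care.
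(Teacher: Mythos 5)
Your proposal is correct but takes a genuinely different route from the paper's on both halves.

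For the upper bound, the paper first restates the problem as $\gamma_{\mathrm{aff}}(B_n)=n+1$ for affine self-maps of the Euclidean ball, and its key tool is the geometric fact (Lemma \ref{lemma:subsphere}) that an ellipsoid contained in $B_n$ meets the sphere $S^{n-1}$ in a subsphere; the sets $A_k=S^{n-1}\cap\Phi^k(S^{n-1})$ then form a strictly decreasing chain of subspheres whose affine dimensions drop, so there are at most $n+1$ of them. You instead stay inside the cone and use the $S$-procedure characterization of $\mathcal{L}_n$-positive maps ($\Psi^\top J\Psi\succeq\beta J$, with $\beta>0$ once $\mathrm{rank}\,\Psi\geq 2$). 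After an inductive reduction to the $\Psi$-cyclic subspace of $x_0$ (which cuts out a lower-dimensional Lorentz cone, as the Lorentz form stays nondegenerate on any subspace meeting the open cone), a null chain $x_0,\dots,x_n$ with $x_0,\dots,x_{n-1}$ spanning forces $x_j^\top(\Psi^\top J\Psi-\beta J)x_j=0$ and hence, by positive semidefiniteness, $\Psi^\top J\Psi=\beta J$ --- so $\Psi$ is a scalar multiple of an orthochronous Lorentz transformation, whose powers permute $\partial\mathcal{L}_n$ and never leave it. That Lorentz-form argument is the algebraic counterpart of the subsphere lemma, and is a nice observation; what the paper's version buys is that it avoids the induction and the $S$-lemma entirely and reads off the bound directly from a single decreasing chain of subspheres.

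For the sharpness example, the two constructions are also different. The paper places points on a latitude circle with Gram matrix $a_{ij}=1-c^{\min(i,j)}$, whose multiplicative shift structure is matched by the latitude-raising map of Lemma \ref{lemma:parallels}, and then composes with a rotation $R$ so that $\Phi=R\circ\Psi$ advances the chain. You instead take a regular-simplex configuration on a slice of the boundary (hollow constant-off-diagonal Lorentzian Gram matrix of signature $(1,n-1)$), prescribe $\Psi x_j=x_{j+1}$ for $j\leq n-2$, and close the system by imposing $\Psi^\top J\Psi=\beta J+ww^\top$ --- a rank-one perturbation of an isometry of the Lorentz form, with $w^\perp=\mathrm{span}(x_0,\dots,x_{n-2})$. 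This pins $\Psi$ down to a companion matrix in the chain basis, and primitivity reduces to the observability-type statement that the only $\Psi$-invariant subspace inside $w^\perp$ is trivial. Conceptually your construction is designed to sit exactly one rank away from the equality case $\Psi^\top J\Psi=\beta J$ of the upper-bound argument, which is aesthetically pleasing; the details you flag (scaling $w$ so the quadratic equation for $\Psi x_{n-1}$ has a real, future-directed solution, and the orthochronality check) are indeed the ones that need a few lines but do go through. Overall: a correct proof by a route genuinely different from, and complementary to, the paper's geometric argument.
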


As we explain later, this theorem can be seen as the affine or projective analogue of the following classical result by Pt\'ak \cite{Ptak62}: if $A$ is an $n \times n$ matrix, then $\rho(A) = \|A\|$ if and only if $\|A^n\|=\|A\|^n$ (we denote by $\rho(A)$ and $\|A\|$ respectively the spectral radius and operator norm of $A$).

The paper is organized as follows. Section \ref{section:background} contains background and connects to related works, as well as a reformulation of Theorem \ref{theorem:main} involving affine self-maps of the Euclidean ball. The bound $\gamma(\Psi) \leq n$ is proved in Section \ref{section:upperbound} and the sharpness of this inequality follows from the example constructed in Section \ref{section:lowerbound}. Finally, when specialized to $n=4$, our result has an implication in quantum information theory which we develop in Section \ref{section:qubits}.

\section{Introduction} \label{section:background}

\subsection{Cones, maximal exponent}

We work in a finite-dimensional real vector space~$V$. A subset $\co \subset V$ is said to be a \emph{convex cone} if, for every $x,y \in \co$ and $s,t \in \R_+$, we have $sx+ty \in \co$. A cone $\co$ is said to be \emph{proper} if it is closed, salient (i.e., $\co \cap (-\co) = \{0\}$) and generating (i.e., $\co - \co =V$). 

We extend to the setting of a proper cone $\co \subset V$ the concepts of positivity and primitivity defined earlier for the Lorentz cones. A linear map $\Psi : V \to V$ is \emph{$\co$-positive} if $\Psi(\co) \subset \co$, \emph{strictly $\co$-positive} if $\Psi(\co \setminus \{0\}) \subset \mathrm{int}(\co)$ and   \emph{$\co$-primitive} if it is $\co$-positive and if there exists an integer $k \geq 1$ such that $\Psi^k$ is strictly $\co$-positive. If $\Psi$ is $\co$-primitive, the smallest such~$k$ is called the \emph{$\co$-primitivity index} of $\Psi$ and denoted $\gamma(\co,\Psi)$.

The \emph{maximal exponent} of $\co$, denoted $\gamma(\co)$, is the supremum of $\gamma(\co,\Psi)$ over all $\co$-primitive maps $\Psi : V \to V$. With this notation, the statement of Theorem \ref{theorem:main} reads as the equality $\gamma(\mathcal{L}_n)=n$.

We use the finite-dimensional version of the Krein--Rutman theorem (see \cite[Theorem~19.2]{Deimling85}): every $\co$-positive operator $\Psi$ has an eigenvector $x \in \co$ associated to the eigenvalue~$\rho(\Psi)$ (the spectral radius of $\Psi$). If moreover $\Psi$ is $\co$-primitive, then necessarily $\rho(\Psi)>0$ (otherwise $\Psi$ would be nilpotent, contradicting $\co$-primitivity) and $x \in \mathrm{int}(\co)$.

\subsection{Duality}

If $\co \subset V$ is a cone, its \emph{dual cone} is the cone in the dual vector space $V^*$ defined as
\[ \co^* = \{ f \in V^* \st \scalar{f}{x} \geq 0 \textnormal{ for every } x \in \co \}. \]
If $\co$ is proper, then $\co^*$ is also proper. The bipolar theorem asserts that $(\co^*)^*=\co$ provided we identify the double dual space $V^{**}$ with $V$. The Lorentz cone $\mathcal{L}_n$ is self-dual: if we identify the vector space $\R^n$ with its own dual using the standard inner product, then $\mathcal{L}_n^*=\mathcal{L}_n$.

A \emph{sole} of a proper cone $\co$ is a set of the form $\{x \in \co \st f(x)=\alpha \}$, where $f \in \mathrm{int} (\co^*)$ and $\alpha > 0$. If $K$ is a sole of $\co$, then $K$ is compact and $\co = \{ \lambda x \st x \in K, \lambda \geq 0\}$. 
 
We have the relation
\[ \mathrm{int} (\co) = \{ x \in V \st \scalar{f}{x} > 0 \textnormal{ for every } f \in \co^* \setminus \{0\} \}. \]

Given a linear map $\Psi : V \to V$
\[ \Psi \textnormal{ is }\co\textnormal{-positive} \iff \scalar{f}{\Psi(x)} \geq 0 \textnormal{ for every } x \in \co, f \in \co^* \]
\[ \Psi \textnormal{ is strictly}\co\textnormal{-positive} \iff \scalar{f}{\Psi(x)} > 0 \textnormal{ for every } x \in \co \setminus \{0\}, f \in \co^* \setminus \{0\}.\]
It is clear from these formulas that $\Psi$ is $\co$-positive (resp.\ strictly $\co$-positive, rep $\co$-primitive) if and only if the adjoint map $\Psi^* : V^* \to V^*$ is $\co^*$-positive (resp.\ strictly $\co^*$-positive, resp.\ $\co^*$-primitive). Moreover the cones $\co$ and $\co^*$ have the same maximal exponents.

\subsection{Affine maximal exponent}

Let $K$ be a convex body (i.e., a compact convex set of full dimension) in a finite-dimensional affine space $W$. An affine map $\Phi : W \to W$ is said to be
\emph{$K$-positive} if $\Phi(K) \subset K$, \emph{strictly $K$-positive} if $\Phi(K) \subset \mathrm{int}(K)$ and \emph{$K$-primitive} if it is $K$-positive and if there exists a positive integer $k$ such that $\Phi^k$ is strictly $K$-positive. If~$\Phi$ is $K$-primitive, the smallest integer $k$ with this property is called the \emph{affine $K$-primitivity index} of $\Phi$ and denoted $\gamma_{\mathrm{aff}}(K,\Phi)$. The \emph{affine maximal exponent} of $K$, denoted $\gamma_{\mathrm{aff}}(K)$, is the supremum of $\gamma_{\mathrm{aff}}(\Phi)$ over all $K$-primitive affine maps $\Phi : W \to W$.

If $X$ is a finite-dimensional normed space with unit ball $B$, observe that a linear map $T : X \to X$ is $B$-positive (resp.\ strictly $B$-positive) if and only if it has operator norm $\leq 1$ (resp.~$< 1$). Moreover,~$T$ is $B$-primitive if and only if it has operator norm $\leq 1$ and spectral radius $<1$. The supremum of $\gamma(B,T)$ over $B$-primitive linear maps $T$ has been studied in the Banach space literature as the \emph{critical exponent} of the normed space $X$. We refer to~\cite{Ptak93} for a survey on critical exponents.

The next proposition states that the maximal exponent of a cone is the supremum of affine maximal exponents of its soles. While this statement is folklore, we could note locate it in the literature and include a proof.

\begin{proposition} \label{prop:sup-over-soles}
Let $\co \subset V$ be a proper cone. Then 
\begin{equation} \label{eq:sup-over-soles} \gamma(\co) = \sup_{K \textnormal{ sole of }\co} \gamma_{\aff}(K).
\end{equation}
\end{proposition}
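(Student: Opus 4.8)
The plan is to prove the two inequalities in \eqref{eq:sup-over-soles} separately, using as a bridge the observation that, after rescaling its defining functional, a sole of $\co$ is exactly $K = \co \cap H$, where $H = \{x \in V \st \scalar{f}{x} = 1\}$ for some $f \in \mathrm{int}(\co^*)$; I would treat $H$ as the ambient affine space for $K$. First I would record two preliminary facts: (i) $K$ is a convex body in $H$ --- it is compact by the quoted property of soles, and it has nonempty interior in $H$ because $H$ meets $\mathrm{int}(\co)$ (rescale any interior point of $\co$ onto $H$); and (ii) $\mathrm{int}(\co) \cap H$ equals the relative interior of $K$ in $H$. Only (ii) carries geometric content: if some relative interior point $y$ of $K$ were to lie on $\partial\co$, a supporting functional $g \in \co^* \setminus \{0\}$ at $y$ would be $\ge 0$ on $K$ and vanish at a relative interior point of it, hence vanish on $\aff(K) = H$, forcing $g = 0$. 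From (ii) I would extract the lemma used in both directions: since $\co \setminus \{0\}$ is precisely the set of positive multiples of points of $K$ (as $\scalar{f}{x} > 0$ for $x \in \co \setminus \{0\}$), if a $\co$-positive map $\Psi$ satisfies $\Psi(H) \subseteq H$, then for every $k$, $\Psi^k$ is strictly $\co$-positive if and only if $(\Psi|_H)^k$ maps $K$ into its relative interior.

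For the inequality $\gamma(\co) \ge \sup_K \gamma_{\aff}(K)$, I would fix a sole $K = \co \cap H$ and a $K$-primitive affine map $\Phi : H \to H$. Since $H$ avoids the origin, its points span $V$ linearly, so $\Phi$ has a unique linear extension $\Psi : V \to V$ with $\Psi|_H = \Phi$; then $\Psi(H) \subseteq H$, and since every nonzero point of $\co$ is a positive multiple of a point of $K$ with $\Phi(K) \subseteq K$, $\Psi$ is $\co$-positive. The lemma (with $(\Psi|_H)^k = \Phi^k$) shows that $\Psi^k$ is strictly $\co$-positive exactly when $\Phi^k$ is strictly $K$-positive, so $\Psi$ is $\co$-primitive with $\gamma(\co,\Psi) = \gamma_{\aff}(K,\Phi)$. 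Taking suprema over $\Phi$ and then over $K$ gives the inequality.

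For the reverse inequality, I would start from a $\co$-primitive $\Psi$. Since $\gamma(\co,\cdot)$ is unchanged by multiplication by a positive scalar and $\rho(\Psi) > 0$, I may assume $\rho(\Psi) = 1$. The adjoint $\Psi^*$ is $\co^*$-primitive with $\rho(\Psi^*) = 1$, so Krein--Rutman provides $f \in \mathrm{int}(\co^*)$ with $\Psi^* f = f$; then $\scalar{f}{\Psi x} = \scalar{f}{x}$ for all $x$, so $\Psi$ preserves $H = \{x \st \scalar{f}{x} = 1\}$. Put $K = \co \cap H$ (a sole) and $\Phi = \Psi|_H$, an affine self-map of $H$ with $\Phi(K) \subseteq K$. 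Applying the lemma again, $\Psi^k$ is strictly $\co$-positive exactly when $\Phi^k$ is strictly $K$-positive, so $\Phi$ is $K$-primitive with $\gamma_{\aff}(K,\Phi) = \gamma(\co,\Psi)$; hence $\sup_K \gamma_{\aff}(K) \ge \gamma(\co,\Psi)$, and taking the supremum over $\Psi$ finishes the proof.

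I expect most of this to be routine bookkeeping; the one point needing care is the construction in the last step, where the normalization $\rho(\Psi) = 1$ together with the use of the \emph{left} Krein--Rutman eigenvector $f$ is exactly what produces a $\Psi$-invariant hyperplane, hence an honest affine self-map of a sole rather than merely a projective one. The only genuinely geometric ingredient is fact (ii), handled by the supporting-hyperplane argument sketched above.
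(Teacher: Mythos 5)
Your proof is correct and takes essentially the same route as the paper: extending affine maps on a sole's hyperplane to linear maps for one inequality, and for the other normalizing $\rho(\Psi)=1$ and using the Krein--Rutman eigenvector of $\Psi^*$ in $\mathrm{int}(\co^*)$ to produce a $\Psi$-invariant hyperplane cutting out a sole. Your preliminary facts (i) and (ii) and the bridging lemma simply make explicit the correspondence between strict $\co$-positivity and strict $K$-positivity that the paper's proof leaves implicit; this is a reasonable and correct elaboration rather than a different argument.
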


\begin{proof}
Given $f \in \mathrm{int} (\co^*)$ and $\alpha > 0$, consider the affine hyperplane $W = \{ x \in V \st f(x)=\alpha \}$ and the sole of $\co$ given by $K= \co \cap W$. Any affine map $\Phi : W \to W$ can be extended uniquely into a linear map $\Psi : V \to V$. Moreover, the affine map $\Phi$ is $K$-positive (resp., strictly $K$-positive, \mbox{$K$-primitive}) if and only if the linear map $\Psi$ is $\co$-positive (resp., strictly $\co$-positive, $\co$-primitive). We have therefore $\gamma_{\aff}(K,\Phi) = \gamma(\co,\Psi)$ and the inequality $\geq$ in equation 
\eqref{eq:sup-over-soles} follows by taking supremum over $K$ and~$\Phi$.

Conversely, let $\Psi : V\to V$ be a $\co$-primitive map. Its spectral radius $\rho(\Psi)$ is nonzero and we may assume  by rescaling that $\rho(\Psi)=1$. By the Krein--Rutman theorem, the adjoint map~$\Psi^*$, which is $\co^*$-primitive, admits an eigenvector $f \in \mathrm{int}(\co^*)$ for the eigenvalue $1$. Consider the affine hyperplane $W = \{ x \in V \st f(x)=1 \}$ and the sole $K = \co \cap W$. Since $\Psi(W)\subset W$, the linear map $\Psi$ induces by restriction a $K$-primitive affine map $\Phi : W \to W$. As before, we have $\gamma_{\aff}(K,\Phi) = \gamma(\co,\Psi)$ and the inequality $\leq$ in equation \eqref{eq:sup-over-soles} follows by taking supremum over~$\Psi$.
\end{proof}

We denote by $B_n$ the unit ball of the standard Euclidean space $\R^n$. Any sole of the Lorentz cone $\mathcal{L}_{n+1}$ is affinely isomorphic to $B_{n}$. By Proposition \ref{prop:sup-over-soles}, Theorem \ref{theorem:main} can be equivalently stated as follows.

\begin{theorem} \label{theorem:affine}
For every integer $n \geq 1$, we have $\gamma_{\mathrm{aff}}(B_n)=n+1$.
\end{theorem}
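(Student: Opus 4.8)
The plan is to prove the two halves of Theorem~\ref{theorem:affine} separately, translating everything into the language of affine self-maps $\Phi$ of the Euclidean ball $B_n$. Write $\Phi(x) = Ax + b$ with $A$ a linear map on $\R^n$ and $b \in \R^n$. Here $\Phi$ is $B_n$-positive means $\Phi(B_n) \subset B_n$, i.e.\ $|Ax+b| \le 1$ for all $|x| \le 1$; strict positivity means this image lies in the open ball; and primitivity means $\Phi$ has a fixed point in the interior (equivalently, after conjugating so that the fixed point is $0$, the linear part $A$ has operator norm $\le 1$ and spectral radius $< 1$). The natural first move is to reduce to the \emph{linear} case: by Krein--Rutman applied to the cone $\mathcal{L}_{n+1}$ (or directly to $\Phi$), a $B_n$-primitive affine map has a unique fixed point $x_0 \in \mathrm{int}(B_n)$; however, translating the fixed point to the origin changes the ball into an ellipsoid, not a ball, so one cannot quite reduce to Pt\'ak's linear critical-exponent theorem on $\R^n$ directly. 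Instead I would work with the homogeneous picture: $\Phi$ lifts to a linear $\mathcal{L}_{n+1}$-positive map $\Psi$ on $\R^{n+1}$ with $\rho(\Psi) = 1$ and Perron eigenvector in $\mathrm{int}(\mathcal{L}_{n+1})$.

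For the \textbf{upper bound} $\gamma_{\mathrm{aff}}(B_n) \le n+1$, the key idea should be a gap/orbit argument. Suppose $\Phi$ is $B_n$-primitive but $\Phi^{n}$ is not strictly $B_n$-positive; then there is a boundary point $y$ of $B_n$ and a point $z \in B_n$ with $\Phi^{n}(z) = y$, hence (since $\Phi^{n}(B_n)$ is a convex body touching $\partial B_n$) a supporting hyperplane at $y$. Pulling this back through the iterates produces, for each $k = 0, 1, \dots, n$, a supporting functional $f_k$ of $B_n$ at the point $\Phi^{k}(z) \in \partial B_n$, together with the relation that $f_k$ is ``extremal'' for the image $\Phi(\,\cdot\,)$. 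Concretely, writing $g_k$ for the corresponding element of $\mathcal{L}_{n+1}^* = \mathcal{L}_{n+1}$ (a null vector of the Lorentz form), $\Psi^*(g_{k+1})$ is a nonnegative multiple of $g_k$ and all the $g_k$ lie on the boundary (light cone). The point is that such a chain of $n+1$ boundary null-directions $g_0, \dots, g_n$ must be affinely dependent in a way that forces a genuine eigenvector of $\Psi^*$ on the boundary of $\mathcal{L}_{n+1}$, contradicting that the Perron eigenvector lies in the interior (equivalently, contradicting primitivity via the strict-positivity characterization in the Duality subsection). Making the counting ``$n+1$ boundary directions in $\R^{n+1}$ force a cycle'' precise — i.e.\ showing one cannot have $n+1$ distinct such light-ray directions mapped into one another by $\Psi^*$ without producing a boundary eigenvalue — is where the real work lies; I expect to use that the boundary of the Lorentz cone, projectivized, is a sphere $S^{n-1}$ and that $\Psi^*$ acts on it by a (projective/conformal) map with a contraction-type structure coming from $\rho(\Psi) = 1$ and norm $\le 1$.

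For the \textbf{lower bound} — exhibiting a primitive $\Phi : \R^n \to \R^n$ with $\gamma_{\mathrm{aff}}(B_n, \Phi) = n+1$ — I would construct an explicit example, and the natural candidate is a nilpotent-type perturbation of a rotation-free map: take $\Phi$ whose linear part is a small multiple of a single Jordan block (a shift) so that $\Phi^{n}$ still sends some boundary point to the boundary while $\Phi^{n+1}$ does not. Equivalently, in the cone picture choose $\Psi$ on $\R^{n+1}$ that is $\mathcal{L}_{n+1}$-positive, has $\rho(\Psi)=1$, fixes one extreme ray of $\mathcal{L}_{n+1}$, and maps the opposite extreme ray onto the interior only after exactly $n+1$ steps; concretely one can try $\Psi$ acting as a weighted shift along a chain of $n+1$ null directions arranged around the light cone. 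One then checks (a) $\Psi$ is $\mathcal{L}_{n+1}$-positive, (b) $\Psi^{n+1}$ is strictly positive — this reduces to verifying that the image of each extreme null ray after $n+1$ steps lands strictly inside, which is a finite computation once the chain is set up — and (c) $\Psi^{n}$ is \emph{not} strictly positive, because one distinguished null ray is still mapped to the boundary. I expect step (b), pinning down the exact number of iterations needed rather than merely ``enough,'' to be the delicate part of the construction; the cleanest route is to design the example so that the orbit of a single boundary point visits $n+1$ prescribed boundary points before entering the interior, mirroring the chain produced in the upper-bound argument so that the two bounds meet exactly at $n+1$.
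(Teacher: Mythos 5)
Your proposal takes a genuinely different route from the paper in both halves, and both halves have real gaps as written.

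\textbf{Upper bound.} You begin by assuming $\Phi$ is primitive with $\Phi^{n}$ not strictly positive and aim for a contradiction; that would prove $\gamma_{\aff}(B_n)\le n$, which is false. Indeed, the example constructed in Section~\ref{section:lowerbound} is exactly a primitive affine map with $\Phi^n$ not strictly positive (it sends a boundary point $x_1$ to the boundary point $x_{n+1}=\Phi^n(x_1)$), so no contradiction is available at that stage. Even after correcting the count to start from ``$\Phi^{n+1}$ not strictly positive'' (which yields $n+2$ boundary null directions $g_0,\dots,g_{n+1}$ in $\R^{n+1}$), the dimension count you propose does not close: $n+2$ vectors in $\R^{n+1}$ are linearly dependent, but that does not force two of the corresponding \emph{rays} on the light cone to coincide (for $n=2$ one can easily pick four distinct null rays in $\R^3$), so no cycle or boundary eigenvector is produced. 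This is precisely the missing idea. The paper's argument does not track a single orbit at all; instead it shows (Lemma~\ref{lemma:subsphere}) that an ellipsoid contained in $B_n$ meets $S^{n-1}$ in a subsphere, so each set $A_k=S^{n-1}\cap\Phi^k(S^{n-1})$ is a subsphere, and a strictly decreasing chain of subspheres must drop in dimension at every step. Since the dimension starts at $n$ and strict positivity of $\Phi^N$ means $A_N=\emptyset$, this gives $N\le n+1$ with the right constant and without any dependence/cycle argument.

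\textbf{Lower bound.} Your proposal that $\Psi$ should fix one extreme ray of $\mathcal{L}_{n+1}$ is incompatible with primitivity: if $\Psi(v)=\lambda v$ with $v$ on the boundary, then $\Psi^k(v)=\lambda^k v$ stays on the boundary for all $k$, so $\Psi^k$ is never strictly $\mathcal{L}_{n+1}$-positive. The alternative Jordan-block suggestion (linear part a small multiple of a nilpotent shift $J$) also fails to produce the sharp example: with $A=\e J$ and $J^n=0$, the map $\Phi^n$ is constant, and by Lemma~\ref{nonconstant} the image of an interior point stays interior, so that constant lies in $\mathrm{int}(B_n)$ and $\Phi^n$ is already strictly positive — giving index at most $n$, not $n+1$. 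The paper's construction is structurally quite different: it realizes a prescribed Gram matrix $(a_{ij})=(1-c^{\min(i,j)})$ by $n$ points on a fixed latitude circle $E_{n,\alpha}$ (Lemma~\ref{lemma:gram-matrix1}), uses an explicit latitude-raising contraction $\Psi$ (Lemma~\ref{lemma:parallels}) and an orthogonal matching of Gram matrices (Lemma~\ref{lemma:gram-matrix2}) to manufacture a chain $x_1\mapsto x_2\mapsto\cdots\mapsto x_{n+1}$ of boundary points with $\Phi^{-1}(x_1)\notin B_n$, and then re-uses the subsphere chain argument to show $\Phi^{n+1}$ \emph{is} strictly positive. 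If you want to salvage your ``weighted shift along null rays'' intuition, you would need to carry out exactly this kind of explicit check that the resulting $\Psi$ is cone-positive and that the $(n+1)$st iterate (and no earlier one) clears the boundary — and that is the bulk of the work.
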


Sections \ref{section:upperbound} and \ref{section:lowerbound} are devoted to the proof of Theorem \ref{theorem:affine}: in Section \ref{section:upperbound} we prove that any \mbox{$B_n$-primitive} affine map $\Psi : \R^n \to \R^n$ satisfies $\gamma_{\aff}(B_n,\Psi) \leq n+1$, and in Section \ref{section:lowerbound} we construct an example showing that this inequality is sharp. 

\subsection{Related works}

The study of maximal exponents of cones can be traced back to the classical result by Wielandt~\cite{wie} which asserts that the maximal exponent of the cone $\R_+^n$ equals $(n-1)^2+1$ (Wielandt's original proof was only published posthumously in \cite{Schneider02}). The maximal exponents of polyhedral cones have been studied in detail in the series of papers \cite{LT2010a,LT2010b,LPT2013}. We also mention that there exist cones for which the maximal exponent is infinite (see \cite[Section 6]{LT2010a}).

Our result is closely related to Pt\'ak's theorem \cite{Ptak62} stating that the critical exponent of the $n$-dimensional Euclidean space $\ell_2^n$ equals $n$. This means that if $\Phi$ is a linear contraction on~$\ell_2^n$ with spectral radius $<1$, then its $n$th iteration $\Phi^n$ maps the unit ball into its interior. Our Theorem \ref{theorem:affine} shows that for affine maps, one more iteration is necessary and sufficient to achieve this property.

We develop in Section \ref{section:qubits} the case of the cone of positive semidefinite complex matrices, which is relevant in quantum information theory in the context of the quantum Wielandt inequality.

\section{Upper bound on the maximal exponent} \label{section:upperbound}

Throughout this section and the following one, we fix an integer $n \geq 1$ and we use the terminology  ``positive'', ``strictly positive'' and ``primitive'' to mean ``$B_n$-positive'', ``strictly $B_n$-positive'' and ``$B_n$-primitive''. We denote by $S^{n-1} = \partial B_n$ the unit sphere in the Euclidean space $\R^n$. Given a subset $X \subset \R^n$, we denote by $\aff (X)$ the affine subspace generated by $X$. We start with a simple lemma.

\begin{lemma}\label{nonconstant}
If an affine map $\Phi : \R^n \rightarrow \R^n$ is positive and nonconstant, then $\Phi(\mathrm{int}(B_n)) \subset \mathrm{int}(B_n)$.
\end{lemma}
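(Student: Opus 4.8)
The plan is to prove the contrapositive-flavored statement directly: assume $\Phi$ is positive (so $\Phi(B_n) \subseteq B_n$) and that there exists a point $x_0 \in \mathrm{int}(B_n)$ with $\Phi(x_0) \in \partial B_n = S^{n-1}$; I will show $\Phi$ must be constant. The key geometric fact I would exploit is that $\Phi(B_n)$ is a convex subset of $B_n$ containing the boundary point $y_0 = \Phi(x_0)$, and that $y_0$ is the image of an \emph{interior} point of the domain. Since $y_0 \in S^{n-1}$ is an extreme point of the convex body $B_n$, and $\Phi(B_n) \subseteq B_n$ is convex with $y_0 \in \Phi(B_n)$, the point $y_0$ must also be an extreme point of $\Phi(B_n)$.

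Now the affine structure enters. Write $x_0 = \frac{1}{2}(x_0 + v) + \frac{1}{2}(x_0 - v)$ for any small vector $v$; since $x_0 \in \mathrm{int}(B_n)$, for $v$ small enough both $x_0 \pm v \in B_n$. Applying the affine map $\Phi$ gives $y_0 = \Phi(x_0) = \frac{1}{2}\Phi(x_0+v) + \frac{1}{2}\Phi(x_0-v)$, expressing $y_0$ as the midpoint of two points $\Phi(x_0+v), \Phi(x_0-v) \in B_n$. Because $y_0$ is an extreme point of $B_n$, this forces $\Phi(x_0+v) = \Phi(x_0-v) = y_0$. In other words, $\Phi(x_0 + v) = y_0$ for every $v$ in some ball around the origin, i.e. $\Phi$ is constant equal to $y_0$ on a neighborhood of $x_0$. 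An affine map that is constant on a nonempty open set is constant everywhere, so $\Phi$ is constant, contradicting the hypothesis. Hence if $\Phi$ is positive and nonconstant, $\Phi$ maps no interior point to the boundary, i.e. $\Phi(\mathrm{int}(B_n)) \subseteq \mathrm{int}(B_n)$.

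I would actually streamline this slightly by skipping the "extreme point of $\Phi(B_n)$" intermediate remark and arguing purely with extreme points of $B_n$: the only property needed is that each point of $S^{n-1}$ is extreme in $B_n$ (strict convexity of the Euclidean ball), which is immediate from the triangle inequality — if $\|y_0\| = 1$ and $y_0 = \frac12(a+b)$ with $\|a\|, \|b\| \leq 1$, then $1 = \|y_0\| \leq \frac12\|a\| + \frac12\|b\| \leq 1$ forces equality throughout, and the equality case of the triangle inequality gives $a = b = y_0$. This keeps the proof self-contained and short.

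The only mild subtlety — not really an obstacle — is the very last step, that an affine map constant on an open set is globally constant: an affine map $\Phi$ has the form $\Phi(x) = Lx + c$ for a linear $L$, and constancy on a neighborhood of $x_0$ means $L v = 0$ for all small $v$, hence $L = 0$. I would state this in one line. There is no real difficulty here; the lemma is genuinely elementary, and the proof is essentially the strict convexity of $B_n$ combined with the midpoint-preserving property of affine maps.
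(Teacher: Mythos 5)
Your proof is correct and is essentially the same as the paper's: both express the interior point $x_0$ as the midpoint of two nearby points in $B_n$, apply affinity to write $\Phi(x_0)$ as a midpoint, invoke the fact that boundary points of $B_n$ are extreme, and conclude that an affine map constant on an open neighborhood is globally constant. Your version just spells out the strict-convexity step (via the equality case of the triangle inequality) that the paper leaves implicit.
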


\begin{proof}
Take $x \in \mathrm{int} (B_n)$ and assume by contradiction that $\Phi(x) \in S^{n-1}$. Let $V$ be a open ball centered at $x$ and contained in $B_n$. For every $y \in V$, the point $z = 2x-y$ is in $V$ and we have $x = \frac{y+z}{2}$, hence $\Phi(x)= \frac{\Phi(y)+\Phi(z)}{2}$. Since $\Phi(x)$ is an  extreme point of $B_n$, it follows that $\Phi(y)=\Phi(z)=\Phi(x)$. The affine function $\Phi$ is constant on $V$ hence constant on $\R^n = \mathrm{aff} (V)$, leading to a contradiction.
\end{proof}

Given a positive map $\Phi : \R^n \to \R^n$, we introduce the set
\begin{equation} \label{eq:def-CPhi} C(\Phi) = S^{n-1} \cap \Phi(S^{n-1}) . \end{equation}

A subset $A \subset S^{n-1}$ is said to be a \emph{subsphere} if it satisfies the relation $A = S^{n-1} \cap \aff(A)$. We say that a subset of $\R^n$ is an \emph{ellipsoid} if it is a linear image of $B_n$. The following observation is fundamental to our proof.
%Geometrically, it means that the set of contact points between the sphere and a tangent ellipsoid is a subsphere. 
In the three-dimensional case, it appears in \cite[Proposition IV.6]{BGNPZ14}.

\begin{lemma} \label{lemma:subsphere}
Let $\mathcal{E}$ be an ellipsoid such that $\mathcal{E} \subset B_n$. Then $\mathcal{E} \cap S^{n-1}$ is a subsphere.
\end{lemma}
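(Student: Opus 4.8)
The plan is to reduce to a statement about where an ellipsoid meets the boundary of the ball, and to exploit the fact that both $\mathcal{E}$ and $B_n$ are sublevel sets of convex quadratics. Write $\mathcal{E} = \{x : q(x) \leq 1\}$ for a suitable quadratic $q(x) = \scalar{A(x-c)}{x-c}$ with $A$ positive semidefinite (here I allow $A$ to be only positive \emph{semi}definite, so that lower-dimensional ellipsoids, i.e. linear images of $B_n$ that are not full-dimensional, are included; in that case $\mathcal{E}$ is unbounded in the kernel directions, but since $\mathcal{E} \subset B_n$ that kernel must be trivial, so in fact $A$ is positive definite and $\mathcal{E}$ is a genuine ellipsoid — I would dispatch this normalization first). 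Then $\mathcal{E} \cap S^{n-1} = \{x : |x|^2 = 1 \text{ and } q(x) \leq 1\}$, and I claim that on this set one actually has $q(x) = 1$: indeed $\mathcal{E} \subset B_n$ means $q(x) \leq 1 \Rightarrow |x|^2 \leq 1$, and if some $x_0$ had $|x_0|^2 = 1$ but $q(x_0) < 1$, then a small ball around $x_0$ would lie in $\mathcal{E}$, hence in $B_n$, contradicting $|x_0| = 1$. So $\mathcal{E} \cap S^{n-1} = \{x : |x|^2 = 1,\ q(x) = 1\}$.

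Next I would use a separation/tangency argument to show this common level set is cut out by an affine equation. Consider the function $p(x) = q(x) - |x|^2 = \scalar{(A-\Id)x}{x} - 2\scalar{Ac}{x} + \scalar{Ac}{c}$. On $S^{n-1}$ we have $q(x) \geq |x|^2$, i.e. $p(x) \geq 0$, with equality exactly on $\mathcal{E} \cap S^{n-1}$. Writing $M = A - \Id$ (a symmetric matrix), the constrained minimization of $\scalar{Mx}{x} - 2\scalar{Ac}{x}$ over $S^{n-1}$ being attained with value $-\scalar{Ac}{c}$ gives, via Lagrange multipliers at each minimizer $x$, the relation $Mx - Ac = \lambda x$ for a common multiplier $\lambda$ (the multiplier is common because $p \geq 0$ on the sphere forces the Hessian condition $M - \lambda \Id \succeq 0$ with the minimum value fixed; I would phrase this as: the set of minimizers of a quadratic on the sphere is itself a subsphere, being the intersection of the sphere with an eigenspace-type affine set). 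Concretely, $(M - \lambda\Id)x = Ac$ for all $x \in \mathcal{E} \cap S^{n-1}$, so if this set is nonempty and $M - \lambda\Id$ is invertible then it is a single point (trivially a subsphere); otherwise $\mathcal{E}\cap S^{n-1}$ lies in the affine subspace $\{x : (M-\lambda\Id)x = Ac\}$, and being also the full preimage $S^{n-1} \cap \{p = 0\}$, one checks it equals $S^{n-1} \cap \aff(\mathcal{E}\cap S^{n-1})$.

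The cleanest route, and the one I would actually write, avoids Lagrange multipliers: since $p(x) \geq 0$ on $S^{n-1}$ and $p$ is a quadratic, there exists $\mu \in \R$ such that $p(x) + \mu(|x|^2 - 1) \geq 0$ for \emph{all} $x \in \R^n$ — this is the classical S-lemma / the fact that a quadratic nonnegative on a sphere is nonnegative modulo the sphere's defining quadratic, valid because $|x|^2 - 1$ is "definite modulo constants". Then $r(x) := p(x) + \mu(|x|^2-1)$ is a nonnegative quadratic, hence $r(x) = |\ell(x)|^2$ for some... no — rather $r$ is a convex quadratic vanishing exactly on an affine subspace $H = \{x : r(x) = 0\}$ (the sum of squares of affine forms; its zero set is affine). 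On $S^{n-1}$ we have $r(x) = p(x)$, so $\mathcal{E} \cap S^{n-1} = \{p = 0\} \cap S^{n-1} = H \cap S^{n-1}$. Finally $\aff(H \cap S^{n-1}) \subseteq H$, so $S^{n-1} \cap \aff(\mathcal{E}\cap S^{n-1}) \subseteq S^{n-1} \cap H = \mathcal{E}\cap S^{n-1}$, and the reverse inclusion is trivial; hence $\mathcal{E}\cap S^{n-1}$ is a subsphere.

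**Main obstacle.** The delicate point is the passage "$p \geq 0$ on $S^{n-1}$ $\Rightarrow$ $\exists\,\mu$ with $p + \mu(|x|^2-1) \geq 0$ on $\R^n$," i.e. the exactness of the S-procedure here. It is true in this setting (one strict inequality constraint of the form $|x|^2 = 1$, or rather the homogenization argument), but it needs a clean justification — either cite the S-lemma, or argue directly: homogenize by setting $x = (y, t)$ with the constraint $|y|^2 = t^2$, reduce to a homogeneous quadratic nonnegative on a quadratic cone, and use that the Lorentz-type quadratic form $t^2 - |y|^2$ is nondegenerate of signature $(1, n)$ so that Finsler's lemma / the two-quadratics theorem applies. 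Also, the edge cases — $\mathcal{E} \cap S^{n-1}$ empty, or a single point, or $\mathcal{E}$ not full-dimensional — must be checked to be consistent with the definition of subsphere ($\emptyset = S^{n-1}\cap\aff(\emptyset)$ if one takes $\aff(\emptyset) = \emptyset$, and singletons and the whole sphere are trivially subspheres). I expect the measure-zero cases and the invocation of the right version of the S-lemma to be the only real friction; the rest is bookkeeping.
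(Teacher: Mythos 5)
Your approach is genuinely different from the paper's. The paper first treats origin-symmetric ellipsoids by diagonalization (an orthonormal basis $x_1,\dots,x_n$ and scalars $\lambda_i\in[0,1]$, with contact set the span of the $x_i$ having $\lambda_i=1$), and then reduces a general ellipsoid to this case via a projective automorphism of $B_n$: the group $PO(1,n)$ acts transitively on chords of $\mathrm{int}(B_n)$, so two contact points can be moved to a pair of antipodes, which forces the transformed ellipsoid (having parallel tangent hyperplanes at antipodal boundary points) to be centred at the origin. You instead observe that the contact set is $\{p=0\}\cap S^{n-1}$ for the quadratic $p(x)=q(x)-|x|^2$, which is nonnegative on $S^{n-1}$; you invoke the equality-constrained $S$-lemma (equivalently, Finsler's lemma after homogenizing against $|x|^2-t^2$) to produce a globally nonnegative quadratic $r=p+\mu(|x|^2-1)$ whose zero set is an affine subspace $H$, and conclude $\mathcal{E}\cap S^{n-1}=H\cap S^{n-1}$. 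This is a cleaner algebraic route that avoids hyperbolic geometry entirely, at the cost of citing or proving the nonstrict two-quadratics theorem for the sphere. You correctly flag this as the one nontrivial ingredient, and your proposed path through homogenization and Finsler is sound.

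There is, however, a genuine gap in your normalization. The lemma is applied in Section~\ref{section:upperbound} to $\mathcal{E}=\Phi^k(B_n)$ where $\Phi$ is a nonconstant positive affine map that need not be injective, so lower-dimensional (degenerate) ellipsoids must be covered. You claim that allowing $A$ to be merely positive semidefinite in $q(x)=\scalar{A(x-c)}{x-c}$ captures these, then argue that a singular $A$ makes $\{q\leq1\}$ an unbounded cylinder, incompatible with $\mathcal{E}\subset B_n$, so $A$ must be positive definite. The conclusion about $A$ is right, but the premise is not: a degenerate ellipsoid (a bounded, lower-dimensional affine image of $B_n$) is never of the form $\{q\leq1\}$ for \emph{any} PSD $A$ on $\R^n$ -- it falls outside your parametrization rather than being excluded by it. The repair is short: set $W=\aff(\mathcal{E})$, apply the full-dimensional argument inside $W$ (where $W\cap B_n$ is a Euclidean ball and $W\cap S^{n-1}$ its boundary sphere), and observe that the intersection of $S^{n-1}$ with the resulting affine subspace of $W$ is again of the form $S^{n-1}\cap\aff(\cdot)$. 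But this case must be acknowledged and handled, not waved away.
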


\begin{proof}
Assume first that $\mathcal{E}$ is origin-symmetric. In this case, there is an orthonormal basis $(x_1,\dots,x_n)$ and numbers $\lambda_1,\dots,\lambda_n$ in $[0,1]$ such that
\[ \mathcal{E} = \left\{ \sum_{i=1}^n \lambda_i t_i x_i \st (t_1,\dots,t_n) \in B_n \right\}  .\]
It is simple to check that $\mathcal{E} \cap S^{n-1}$ equals $F \cap S^{n-1}$, where $F \subset \R^n$ is the linear subspace spanned by $\{ x_i \st \lambda_i= 1 \}$. 
This proves the lemma under the extra hypothesis that $\mathcal{E}$ is origin-symmetric.

Assume now that $\mathcal{E}$ is a general ellipsoid. If $\card (\mathcal{E} \cap S^{n-1}) \leq 1$, then $\mathcal{E} \cap S^{n-1}$ is a subsphere. Otherwise, $\mathcal{E} \cap S^{n-1}$ contains two distinct elements $x$ and $x'$.
%Let $L$ be the line generated by $x$ and $x'$, and let $z$ a arbitrary point in  $L \cap \mathrm{int} (B_n)$.
Since the group $PO(1,n)$ of projective automorphisms of $B_n$ acts transitively on the set of lines intersecting $\mathrm{int}(B_n)$ \cite[Theorem~3.1.6]{Ratcliffe19}, we may find a projective transformation $\Theta : B_n \to B_n$ sending $x$ and $x'$ to a pair of antipodal points. The ellipsoid $\mathcal{F} = \Theta(\mathcal{E})$ intersects $S^{n-1}$ in two antipodal points and is therefore origin-symmetric. Since $\Theta$ preserves subspheres and $\mathcal{E} \cap S^{n-1} = \Theta^{-1}(\mathcal{F} \cap S^{n-1})$, we conclude by reducing to the origin-symmetric case.
\end{proof}

We now show that a primitive affine map $\Phi : \R^n \to \R^n$ satisfies $\gamma_{\aff}(B_n,\Phi) \leq n+1$. If~$\Phi$ is constant equal to $x \in B_n$, then necessarily $x \in \mathrm{int}(B_n)$ (otherwise $\Phi$ would not be primitive) and therefore $\gamma_{\aff}(B_n,\Phi) =1$. We now assume that $\Phi$ is nonconstant.

Given an integer $k \geq 0$, we set $A_k = C(\Phi^k)$. Since $\Phi$ is nonconstant, it follows from Lemma~\ref{nonconstant} that $A_{k+1} \subset A_k$. Assume that $A_{k+1}=A_k$ for some $k \geq 0$. Consider an element $x \in A_{k+1}$. There exists $y \in S^{n-1}$ such that $x=\Phi^{k+1}(y)$. The point $\Phi^k(y)$ belongs to~$A_k$, hence to $A_{k+1}$, and therefore we have $\Phi^k(y)=\Phi^{k+1}(z)$ for some $z \in S^{n-1}$. It follows that $x=\Phi^{k+2}(z)$ and thus that $x$ belongs to $A_{k+2}$. We proved that $A_{k+2}=A_{k+1}=A_k$ and therefore, by induction, $A_l=A_k$ for every $l \geq k$.
Since $\Phi$ is primitive, it follows that $A_l = \emptyset$ for every $l \geq k$.

Let $N = \gamma_{\aff}(B_n, \Phi)$ be the affine primitivity index of $\Phi$. The previous paragraph shows that
\[ \emptyset = A_N \subsetneq A_{N-1} \subsetneq \dots \subsetneq A_2 \subsetneq A_1 \subsetneq A_0 = S^{n-1}. \]
By Lemma \ref{lemma:subsphere}, each set $A_k$ is a subsphere. If two subspheres $A$, $A'$ satisfy $A \subsetneq A'$, then we have $\aff (A) \subsetneq \aff(A')$ and therefore $\dim \aff(A) < \dim \aff (A')$. The chain of inequalities
\[ 0 \leq \dim \aff(A_{N-1}) < \dots < \dim \aff(A_2) < \dim \aff(A_1) < \dim \aff(A_0) = n \]
implies that $N \leq n+1$. 

\section{A map with large maximal exponent} \label{section:lowerbound}

Our goal is to give an example of an affine map $\Phi : \R^n \to \R^n$ which is primitive and such that $\Phi^n$ is not strictly positive. Such a map satisfies $\gamma_{\mathrm{aff}}(B_n,\Phi) \geq n+1$ and, together with the result from Section \ref{section:upperbound}, allows us to conclude that $\gamma_{\mathrm{aff}}(B_n) = n+1$.

Given an angle $\theta \in [-\pi/2,\pi/2]$, we denote by $E_{n,\theta}$ the ``circle of latitude $\theta$'' defined as
\[ E_{n,\theta} = \{ (x_1,\dots,x_n) \in S^{n-1} \st x_n = \sin \theta\} .\]
Our first lemma shows that affine positive maps may send any circle of positive latitude to any circle of higher latitude. 

\begin{lemma} \label{lemma:parallels}
Let $0<\alpha<\beta<\pi/2$ and set $\lambda = \frac{\cos \beta}{\cos \alpha}$, $\mu=\frac{\tan \alpha}{\tan \beta}$. Define a map $\Psi: \R^n \to \R^n$ by the formula 
\[ \Psi : (x_1,\dots,x_n) \mapsto \left( \lambda x_1,\dots,\lambda x_{n-1}, \lambda \mu x_n + \sqrt{(1-\lambda^2)(1-\mu^2)} \right) .\]
\begin{enumerate}
    \item[(a)] The map $\Psi$ is a positive affine bijection.
    \item[(b)] If $x \in E_{n, \alpha}$, then $\Psi(x) \in E_{n, \beta}$.
    \item[(c)] If $x,y \in E_{n, \alpha}$, then $\|\Psi(x)-\Psi(y) \| = \lambda \|x-y \|$.
    \item[(d)] If $x \in B_n$ is such that $\Psi(x) \in S^{n-1}$, then $x \in E_{n, \alpha}$.
\end{enumerate}
\end{lemma}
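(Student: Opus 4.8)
The plan is to reduce all four items to a single algebraic identity. First I would record that $0<\alpha<\beta<\pi/2$ gives $0<\cos\beta<\cos\alpha$ and $0<\tan\alpha<\tan\beta$, hence $\lambda,\mu\in(0,1)$, so that $c:=\sqrt{(1-\lambda^2)(1-\mu^2)}>0$ is well-defined. The displayed formula then exhibits $\Psi$ as an affine map whose linear part is diagonal with nonzero diagonal $(\lambda,\dots,\lambda,\lambda\mu)$, so $\Psi$ is an affine bijection; this is half of (a). It is convenient to record, straight from the definitions and $\cos^2\theta=1-\sin^2\theta$, the identities
\[
1-\lambda^2=\frac{\sin^2\beta-\sin^2\alpha}{\cos^2\alpha},\qquad
1-\mu^2=\frac{\sin^2\beta-\sin^2\alpha}{\cos^2\alpha\,\sin^2\beta},\qquad
c=\frac{\sin^2\beta-\sin^2\alpha}{\cos^2\alpha\,\sin\beta},
\]
so that in particular $\dfrac{1-\lambda^2}{1-\mu^2}=\sin^2\beta$ and $\dfrac{\mu}{\lambda}=\dfrac{\sin\alpha}{\sin\beta}$.

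The core step is the following computation. Write $x=(x_1,\dots,x_n)$, put $r^2=x_1^2+\dots+x_{n-1}^2$ and $s=x_n$, so that $\|\Psi(x)\|^2=\lambda^2 r^2+(\lambda\mu s+c)^2$. Expanding and substituting $c^2=(1-\lambda^2)(1-\mu^2)$ — the point being that the cross term $2\lambda\mu c s$ is exactly twice the product of $\mu\sqrt{1-\lambda^2}$ and $\lambda s\sqrt{1-\mu^2}$ — one obtains the identity
\[
\lambda^2(1-s^2)+(\lambda\mu s+c)^2=1-\Bigl(\mu\sqrt{1-\lambda^2}-\lambda s\sqrt{1-\mu^2}\Bigr)^2.
\]
Consequently, for $x\in B_n$ one has $r^2\leq 1-s^2$, hence
\[
\|\Psi(x)\|^2\leq\lambda^2(1-s^2)+(\lambda\mu s+c)^2=1-\Bigl(\mu\sqrt{1-\lambda^2}-\lambda s\sqrt{1-\mu^2}\Bigr)^2\leq 1,
\]
which finishes (a). Moreover $\|\Psi(x)\|^2=1$ forces both $r^2=1-s^2$ and $\mu\sqrt{1-\lambda^2}=\lambda s\sqrt{1-\mu^2}$; the latter gives $s=\frac{\mu}{\lambda}\sqrt{\frac{1-\lambda^2}{1-\mu^2}}=\sin\alpha$ by the recorded identities, whence $r^2=\cos^2\alpha$ and $x\in E_{n,\alpha}$. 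This is precisely (d).

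For (b), take $x\in E_{n,\alpha}$, so $s=\sin\alpha$ and $r^2=\cos^2\alpha$: both inequalities in the display above become equalities, so $\Psi(x)\in S^{n-1}$, and it only remains to check that the last coordinate of $\Psi(x)$ equals $\sin\beta$, i.e. $\lambda\mu\sin\alpha+c=\sin\beta$; this is a direct substitution using the formulas for $\lambda\mu$ and $c$, the numerator simplifying as $\sin^2\alpha\cos^2\beta+\sin^2\beta-\sin^2\alpha=\sin^2\beta\cos^2\alpha$. For (c), if $x,y\in E_{n,\alpha}$ then $x-y$ has vanishing last coordinate, and on the hyperplane $\{x_n=0\}$ the linear part of $\Psi$ acts as multiplication by $\lambda$, so $\|\Psi(x)-\Psi(y)\|=\lambda\|x-y\|$.

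There is no genuine obstacle here; the only thing to get right is the bookkeeping, and the clean way to organize it is to isolate the identity $\lambda^2(1-s^2)+(\lambda\mu s+c)^2=1-(\mu\sqrt{1-\lambda^2}-\lambda s\sqrt{1-\mu^2})^2$ at the outset, since (a) is its inequality form, (d) is its equality case, and (b) is the same equality case combined with the routine verification that the latitude lands on $\sin\beta$.
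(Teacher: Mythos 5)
Your proof is correct and follows essentially the same route as the paper: both hinge on writing $1-\|\Psi(x)\|^2$ as a perfect square, deducing (a) from nonnegativity, (d) from the equality case, and (b), (c) by direct substitution. One small refinement in your version is that you establish the inequality over all of $B_n$ rather than only over $S^{n-1}$, which makes (d) immediate without needing to invoke separately that a positive nonconstant affine map sends the open ball into the open ball.
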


\begin{proof}
It is immediate to check that $\Psi$ is affine and bijective, as well as property (c). Property (b) follows from the formula
$\sin \beta = \lambda \mu \sin \alpha + \sqrt{(1-\lambda^2)(1-\mu^2)}$. To check positivity of $\Psi$, it suffices to show that $\|\Psi(x)\| \leq 1$ for any $x \in S^{n-1}$. Let $\theta \in [-\pi/2,\pi/2]$ be the latitude of $x$, i.e., such that $x \in E_{n,\theta}$. We compute
\begin{eqnarray*}
1 - \| \Psi(x) \|^2 & =& 1 - \lambda^2 \cos^2 \theta - \left(\lambda \mu \sin \theta + \sqrt{(1-\lambda^2)(1-\mu^2)}\right)^2  \\
 & = & \left( \lambda \sqrt{1- \mu^2} \sin \theta - \mu \sqrt{1-\lambda^2} \right)^2 \\
 & = & \lambda^2 (1-\mu^2) (\sin \theta - \sin \alpha)^2 .
\end{eqnarray*}
The positivity of $\Psi$, together with property (d), follow from this formula.
\end{proof}

\begin{lemma} \label{lemma:gram-matrix1} 
Let $A=(a_{ij})$ be a $n \times n$ positive definite symmetric matrix satisfying $a_{ii}=1$ for every~$i$ in $\{1,\dots,n\}$. There is a number $\alpha \in (0,\pi/2)$ and vectors $x_1,\dots,x_n \in E_{n,\alpha}$ such that, for every $i,j$ in $\{1,\dots,n\}$
\[ a_{ij} = \scalar{x_i}{x_j}.\]
\end{lemma}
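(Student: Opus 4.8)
The plan is to realize the abstract Gram matrix $A$ geometrically by placing $n$ unit vectors on a common circle of latitude in $S^{n-1}$. First I would recall the standard fact that a positive definite symmetric matrix $A$ with unit diagonal is the Gram matrix of some unit vectors $u_1,\dots,u_n \in \R^n$: this follows by taking $A = B^T B$ for some invertible $B$ (Cholesky, or $B = A^{1/2}$) and setting $u_i$ to be the $i$-th column of $B$; the condition $a_{ii}=1$ forces $\|u_i\|=1$. At this stage the $u_i$ lie on $S^{n-1}$ but have no reason to share a common last coordinate.

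The key idea is then a dimension-lifting / compression trick: since $A$ is an $n \times n$ matrix, its vectors $u_i$ already live in $\R^n$, but we have one coordinate of freedom to spare if we instead think of the circle of latitude $E_{n,\alpha}$ as sitting inside an affine hyperplane. Concretely, $E_{n,\alpha} = \{ x \in S^{n-1} \st x_n = \sin\alpha \}$ is a Euclidean sphere of radius $\cos\alpha$ and dimension $n-2$, centered at $(0,\dots,0,\sin\alpha)$. I want to find $\alpha \in (0,\pi/2)$ and an isometric embedding of a scaled copy of my points into that $(n-2)$-sphere so that the inner products in $\R^n$ come out equal to $a_{ij}$. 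If $x_i = (y_i, \sin\alpha)$ with $y_i \in \R^{n-1}$, $\|y_i\| = \cos\alpha$, then $\scalar{x_i}{x_j} = \scalar{y_i}{y_j} + \sin^2\alpha$. So I need vectors $y_1,\dots,y_n$ of common norm $\cos\alpha$ in $\R^{n-1}$ with $\scalar{y_i}{y_j} = a_{ij} - \sin^2\alpha$ for all $i,j$, i.e.\ I need the matrix $A - \sin^2\alpha \cdot J$ (where $J$ is the all-ones matrix) to be a positive semidefinite Gram matrix of rank at most $n-1$, with constant diagonal $\cos^2\alpha = 1 - \sin^2\alpha$.

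The crux is therefore choosing $\alpha$ so that $M(\alpha) := A - \sin^2\alpha \cdot J$ is positive semidefinite and singular (rank $\le n-1$), with the diagonal automatically equal to $1 - \sin^2\alpha$ as desired. Write $s = \sin^2\alpha \in (0,1)$ and consider $\det(A - sJ)$ as a function of $s$: it is an affine function of $s$ (since $J$ has rank one, by the matrix determinant lemma $\det(A - sJ) = \det(A)\,(1 - s\,\mathbf{1}^T A^{-1}\mathbf{1})$), it equals $\det A > 0$ at $s=0$, and its unique zero is at $s_0 = 1/(\mathbf{1}^T A^{-1}\mathbf{1})$. Since $A \succ 0$ we have $\mathbf{1}^T A^{-1} \mathbf{1} > 0$, so $s_0 > 0$; and because $A - s_0 J$ must be positive semidefinite and singular with nonnegative diagonal entries $1 - s_0$, one gets $s_0 \le 1$, in fact $s_0 < 1$ because a PSD matrix with a zero diagonal entry would have that whole row zero, contradicting that $A - s_0J$ has off-diagonal entries agreeing with those of $A - s_0 J$ in a nondegenerate way — more simply, $1 - s_0 = $ a diagonal entry of a nonzero PSD matrix, hence $> 0$. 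For $s \in [0, s_0)$ the matrix $A - sJ$ is positive definite (eigenvalues move continuously and the determinant stays positive on this interval while $A - sJ \succ 0$ at $s=0$ by a connectedness argument), so at $s = s_0$ it is positive semidefinite of rank exactly $n-1$. Then I set $\alpha = \arcsin\sqrt{s_0} \in (0,\pi/2)$, factor $A - s_0 J = Y^T Y$ with $Y$ an $(n-1)\times n$ matrix, let $y_i$ be the $i$-th column of $Y$ (so $\|y_i\|^2 = 1 - s_0 = \cos^2\alpha$ and $\scalar{y_i}{y_j} = a_{ij} - s_0$), and finally set $x_i = (y_i, \sin\alpha) \in E_{n,\alpha}$; these satisfy $\scalar{x_i}{x_j} = a_{ij}$.

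The main obstacle I anticipate is the bookkeeping around why $s_0 \in (0,1)$ and why $A - sJ$ remains positive semidefinite for all $s \in [0,s_0]$ rather than leaving the PSD cone earlier — this needs the observation that on $[0,s_0]$ the determinant is positive except at the endpoint and that $A - sJ$ cannot acquire a negative eigenvalue without the determinant vanishing first, together with the fact that $1 - s_0$ (being a diagonal entry of a nonzero PSD matrix) is strictly positive, which pins $\alpha$ strictly inside $(0,\pi/2)$. Everything else is routine linear algebra.
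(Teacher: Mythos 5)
Your proof is correct, but it takes a genuinely different route from the paper's. The paper realizes $A$ as the Gram matrix of unit vectors $y_1,\dots,y_n\in S^{n-1}\subset\R^n$, notes that invertibility of $A$ forces the $y_i$ to be linearly independent so that $H=\aff\{y_1,\dots,y_n\}$ is a hyperplane not through the origin, and then applies an orthogonal map carrying $H$ onto $\{z_n=\sin\alpha\}$; the latitude $\alpha$ is produced implicitly as the distance from the origin to $H$. You instead work one dimension down, reducing the problem to finding $s=\sin^2\alpha$ so that $A-sJ$ is positive semidefinite of rank at most $n-1$, and you pin down $s_0=1/(\mathbf{1}^TA^{-1}\mathbf{1})$ via the matrix determinant lemma, checking positivity on $[0,s_0)$ by an eigenvalue-continuity argument and $s_0<1$ from the diagonal. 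Both approaches are sound; yours is more computational and has the small virtue of producing an explicit formula for $\alpha$ (indeed $1/\sqrt{\mathbf{1}^TA^{-1}\mathbf{1}}$ is exactly the distance from the origin to the affine hull in the paper's picture, so the two proofs implicitly find the same $\alpha$), while the paper's is shorter and avoids the determinant/continuity bookkeeping by delegating it to the geometric observation about linear independence. One minor remark: your phrase ``contradicting that $A - s_0J$ has off-diagonal entries agreeing \dots in a nondegenerate way'' is muddled; the clean version (which you then give) is just that all diagonal entries of the PSD matrix $A-s_0J$ equal $1-s_0$, so if $s_0=1$ the matrix is zero, forcing $A=J$, which is singular for $n\ge 2$ and contradicts positive definiteness.
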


\begin{proof}
It is well-known \cite[Corollary 7.2.11]{hornjohnson} that we can find $y_1,\dots,y_n \in S^{n-1}$ such that $a_{ij} = \scalar{y_i}{y_j}.$ Since $A$ is invertible, the vectors $y_1,\dots,y_n$ are linearly independent and thus the hyperplane $H=\aff \{y_1,\dots,y_n\}$ does not contain $0$. We may therefore find an orthogonal transformation $Q \in O(n)$ 
such that 
\[ Q(H) = \{ (z_1,\dots,z_n) \in \R^n \st z_n = \sin \alpha \} \]
for some $\alpha \in (0,\pi/2)$. The points $x_i = Q(y_i)$ have the desired property.
\end{proof}

\begin{lemma} \label{lemma:gram-matrix2}
Consider points $x_1,\dots,x_k$ and $y_1,\dots,y_k$ in $S^{n-1}$. The following are equivalent.
\begin{enumerate}
\item There is $R \in O(n)$ such that $R(x_i)=y_i$ for every $i \in \{1,\dots,k\}$.\label{orthogonal}
\item For every $i,j$ in $\{1,\dots,k\}$, we have $\|x_i-x_j\| = \|y_i-y_j\|$. \label{isometry}
\end{enumerate} 
\end{lemma}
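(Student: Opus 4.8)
The statement to prove is the equivalence in Lemma~\ref{lemma:gram-matrix2}: for points $x_1,\dots,x_k$ and $y_1,\dots,y_k$ on $S^{n-1}$, there is an orthogonal transformation matching them if and only if the pairwise distances agree.

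The plan is as follows. The implication \eqref{orthogonal} $\Rightarrow$ \eqref{isometry} is immediate since every $R \in O(n)$ is an isometry of $\R^n$, so $\|R(x_i)-R(x_j)\| = \|x_i - x_j\|$. The substance is the converse. First I would translate the hypothesis on distances into one on inner products: since all points lie on $S^{n-1}$, we have $\|x_i - x_j\|^2 = 2 - 2\scalar{x_i}{x_j}$ and similarly for the $y$'s, so condition \eqref{isometry} is equivalent to $\scalar{x_i}{x_j} = \scalar{y_i}{y_j}$ for all $i,j$ (including $i=j$, which just records that all points are unit vectors). Thus the two families have the same Gram matrix.

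Next, the standard fact is that two tuples of vectors in $\R^n$ with the same Gram matrix are related by an orthogonal transformation. To see this, let $F = \mathspan\{x_1,\dots,x_k\}$ and $G = \mathspan\{y_1,\dots,y_k\}$. Choosing a maximal linearly independent subset, say $x_{i_1},\dots,x_{i_r}$, the equality of Gram matrices forces $y_{i_1},\dots,y_{i_r}$ to be linearly independent as well (a linear dependence among the $y$'s would, upon pairing with the $y_{i_j}$, produce the same dependence among the $x$'s via the Gram relations), so $\dim F = \dim G = r$. Define a linear map $R_0 : F \to G$ by $R_0(x_{i_j}) = y_{i_j}$; one checks using the Gram equalities that $R_0$ sends $x_i$ to $y_i$ for all $i$ (expand $x_i$ in the basis and compare inner products) and that $R_0$ is an isometry from $F$ onto $G$. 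Since $F$ and $G$ are subspaces of the same dimension in $\R^n$, their orthogonal complements also have equal dimension, so $R_0$ extends to an orthogonal transformation $R \in O(n)$ by choosing any linear isometry between $F^\perp$ and $G^\perp$. Then $R(x_i) = y_i$ for every $i$, as desired.

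The only mildly delicate point — and the one I would write out carefully — is verifying that the map $R_0$ defined on the basis actually agrees with $x_i \mapsto y_i$ on \emph{all} indices and is genuinely an isometry; both verifications are pure linear algebra driven by the identity of Gram matrices, with no geometric input beyond it. Everything else is routine. (Alternatively, one could invoke \cite[Corollary~7.2.11]{hornjohnson} twice to realize both Gram matrices concretely and patch, but the direct argument above is cleaner and self-contained.)
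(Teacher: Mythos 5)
Your proof is correct and follows essentially the same route as the paper: reduce the distance condition to equality of Gram matrices, show that $x_i \mapsto y_i$ induces a well-defined linear isometry between the spans, then extend to $O(n)$ via the orthogonal complements. The only stylistic difference is that the paper establishes well-definedness and isometry in one stroke by checking $\bigl\|\sum \lambda_i x_i\bigr\| = \bigl\|\sum \lambda_i y_i\bigr\|$ for all scalars $\lambda_i$ directly from the Gram identity, whereas you first extract a maximal independent subfamily and then verify agreement on the remaining indices; the paper's version avoids that bookkeeping but both arguments are sound.
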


\begin{proof}

It is clear that (\ref{orthogonal}) implies (\ref{isometry}). Now assume that (\ref{isometry}) holds. Since all vectors involved are unit, we have $\scalar{x_i}{x_j} = \scalar{y_i}{y_j}$ for every $i,j$. Moreover, for every 
$\lambda_1,\dots,\lambda_k$ we have 
\begin{equation*}
\left\| \sum_{i=1}^k \lambda_i y_i \right\|^2 
= \sum_{i=1}^k \sum_{j=1}^k \lambda_i\lambda_j \scalar{y_i}{y_j} 
= \sum_{i=1}^k \sum_{j=1}^k \lambda_i\lambda_j \scalar{x_i}{x_j}
= \left\| \sum_{i=1}^k \lambda_i x_i \right\|^2 .
\end{equation*} This shows that the map $\hat{R}: \mathspan \{x_1,\dots,x_k\} \rightarrow \mathspan \{y_1,\dots,y_k\}$ defined by the formula
\[\hat{R} \left( \sum_{i=1}^k \lambda_i x_i \right) = \sum_{i=1}^k \lambda_i y_i
.\]
is well-defined and isometric. Finally, we extend $\hat{R}$ to a linear isometry $R: \R^n \rightarrow \R^n$ by choosing any isometry from $\mathspan \{x_1,\dots,x_k\}^\perp$ to $\mathspan \{y_1,\dots,y_k\}^\perp$. By construction, we have $R \in O(n)$ and $R(x_i) = y_i$.
\end{proof}

\begin{figure}[!htbp]
	\centering
	\includegraphics[width=3in]{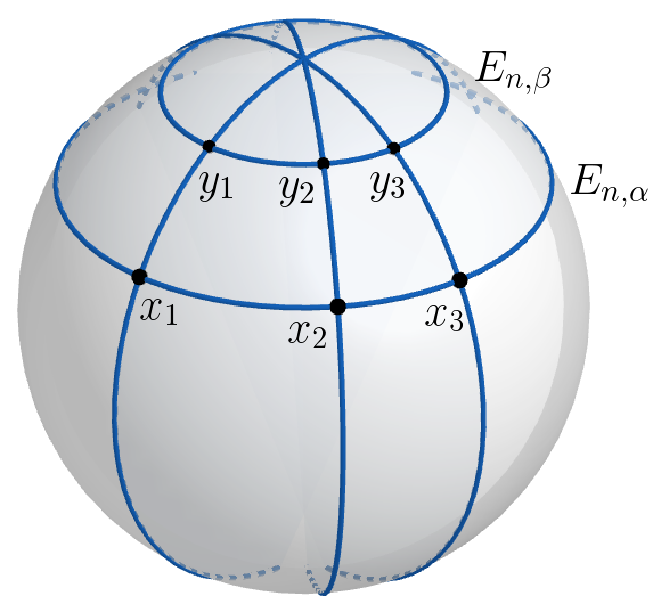}
	\caption{The affine map $\Phi$ is obtained as $R \circ \Psi$. The map $\Psi$ is a positive affine map which preserves longitude and sends a point $x_i$ with latitude $\alpha$ to a point $y_i$ with latitude $\beta>\alpha$. The map $R$ is a rotation chosen such that $R(y_1)=x_2$ and $R(y_2)=x_3$. It requires $4$ iterations of $\Phi$ from the initial point $x_1$ before reaching the interior of the unit ball.}
\end{figure}

We now construct a primitive map $\Phi$ such that $\Phi^n$ is not strictly positive. When $n=3$, an example of such a construction is depicted in Figure 1. 

Consider the following $n \times n$ matrix $A=(a_{ij})$, indexed by a parameter $c \in (0,1)$
\[ a_{ij} = \begin{cases} 1 & \textnormal{ if } i = j \\
1 - c^{\min(i,j)} & \textnormal{ if } i \neq j
\end{cases}
\]
When $c$ approaches $1$, the matrix $A$ converges to the identity matrix. We may therefore choose a value $c \in (0,1)$ such that the matrix $A$ is positive definite. By Lemma \ref{lemma:gram-matrix1}, we may find $\alpha \in (0,\pi/2)$ and vectors $x_1,\dots,x_n$ in $E_{n,\alpha}$ such that $a_{ij} = \scalar{x_i}{x_j}$. For $i \neq j$, we have
\[ \|x_i-x_j\|^2 = 2-2a_{ij} = 2 c^{\min(i,j)}. \] 
Define $\beta \in (\alpha,\pi/2)$ by the relation $\frac{\cos^2 \beta}{\cos^2 \alpha}=c$ and let $\Psi$
be the affine map given by Lemma~\ref{lemma:parallels} (applied with the present values of $\alpha$ and $\beta$). For $1 \leq i \leq n$, set $y_i = \Psi(x_i)$. By Lemma~\ref{lemma:parallels}(b), we have $y_i \in E_{n,\beta}$. For $1 \leq i < j \leq n-1$, we compute using Lemma \ref{lemma:parallels}(c)
\[ \|y_i-y_j\|^2 = \frac{\cos^2 \beta}{\cos^2 \alpha} \|x_i - x_j\|^2 = 2c^{\min(i,j)+1} = \|x_{i+1}-x_{j+1}\|^2. \]

By Lemma \ref{lemma:gram-matrix2}, there exists $R \in O(n)$ such that $R(y_i)=x_{i+1}$ for $1 \leq i \leq n-1$. We define an affine bijection $\Phi : \R^n \to \R^n$ by the formula $\Phi = R \circ \Psi$. We also set $x_{n+1}=\Phi(x_n)$, so that the relation $x_{i+1} = \Phi(x_i)$ holds for $1 \leq i \leq n$. Since $x_{n+1}=\Phi^n(x_1)$ belongs to $S^{n-1}$, it follows that $\Phi^n$ is not strictly positive.

\begin{lemma} \label{lemma:yn+1}
The point $x_0$ defined as $x_0 = \Phi^{-1}(x_1)$ does not belong to $B_n$.
\end{lemma}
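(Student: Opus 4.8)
The plan is to show that $x_0 = \Phi^{-1}(x_1)$ lies strictly outside $B_n$ by exhibiting an explicit norm computation. Recall $\Phi = R \circ \Psi$, so $\Phi^{-1} = \Psi^{-1} \circ R^{-1}$, and $x_0 = \Psi^{-1}(R^{-1}(x_1))$. Since $R(y_i) = x_{i+1}$ for $1 \le i \le n-1$, we do \emph{not} directly know $R^{-1}(x_1)$; however, the natural guess is that $x_0$ should be the ``$y_0$'' that would precede $y_1$ under the shift pattern, i.e.\ a point on $E_{n,\beta}$ whose image under $\Psi^{-1}$ is on $E_{n,\alpha}$. The key realization is that $R^{-1}(x_1)$ need not lie on $E_{n,\beta}$ at all, because $x_1$ is not in the range of $\{x_2,\dots,x_n\} = R(\{y_1,\dots,y_{n-1}\})$ in a way that extends the pattern. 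So first I would carefully identify $w := R^{-1}(x_1) \in S^{n-1}$ and then estimate $\|\Psi^{-1}(w)\|$.

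\textbf{Setting up the computation.} By Lemma~\ref{lemma:parallels}, $\Psi$ maps $E_{n,\alpha}$ to $E_{n,\beta}$ and is a positive bijection; moreover part (d) says the \emph{only} points of $B_n$ that $\Psi$ sends into $S^{n-1}$ are those on $E_{n,\alpha}$. The inverse statement I want is a quantitative version: for $w \in S^{n-1}$ with latitude $\phi$, the point $\Psi^{-1}(w)$ has norm-squared that I can read off by inverting the identity $1 - \|\Psi(x)\|^2 = \lambda^2(1-\mu^2)(\sin\theta - \sin\alpha)^2$ derived in the proof of Lemma~\ref{lemma:parallels}. Writing $x = \Psi^{-1}(w)$ and letting $\theta$ be its latitude, one gets $\|w\|^2 = 1$ forces, after rearrangement, a relation between the latitude of $w$ and $\theta$; if $w \notin E_{n,\beta}$ then $\theta$ is forced off $E_{n,\alpha}$ and in fact $\|\Psi^{-1}(w)\| $ can exceed $1$. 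Concretely I would compute the latitude $\phi_1$ of $w = R^{-1}(x_1)$ — since $R$ is orthogonal it is determined by $\langle R^{-1}(x_1), e_n\rangle = \langle x_1, R(e_n)\rangle$, and $R(e_n)$ is pinned down (up to the irrelevant orthogonal complement choice) by the constraints $R(y_i) = x_{i+1}$ together with the position of $e_n$ relative to $\aff\{y_1,\dots,y_{n-1}\}$. Then plug $\phi_1$ into the inverted Lemma~\ref{lemma:parallels} formula to get $\|x_0\|^2 > 1$.

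\textbf{The main obstacle} I expect is bookkeeping the rotation $R$: although $R$ is only defined up to its action on $\mathrm{span}\{y_1,\dots,y_{n-1}\}^\perp$, the quantity $\langle x_1, R(e_n)\rangle$ \emph{is} well-defined because $e_n$ decomposes as a component in $\aff\{y_i\}$'s direction space plus a component determined by the common latitude $\beta$, and $R$ acts rigidly on the first part. The cleanest route is probably to work with the Gram-matrix data directly: express $e_n$ (equivalently, the ``north pole direction'') in terms of $y_1,\dots,y_{n-1}$ and the normal to their affine hull, apply $R$ using $R(y_i)=x_{i+1}$, and compute the inner product with $x_1$ using the known matrix $A=(a_{ij})$ with $a_{ij} = 1 - c^{\min(i,j)}$. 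This reduces everything to a finite linear-algebra identity in the $a_{ij}$'s and the parameters $\lambda,\mu,\alpha,\beta$, which one then checks yields $\|x_0\| > 1$ — geometrically, $x_0$ is the point the orbit would have come from, but since the orbit $x_1,\dots,x_{n+1}$ ``just barely'' stays on the sphere for $n$ steps, one more step backwards must leave the ball. I would finish by verifying this strict inequality, which is the content of the claim and also confirms that $\Phi$ is genuinely primitive (no nontrivial invariant boundary structure forces a longer index than $n+1$).
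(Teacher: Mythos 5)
Your reduction is correct: you identify that $y_0 := R^{-1}(x_1) = \Psi(x_0)$ is the point to analyze, that the goal is to show $y_0 \notin E_{n,\beta}$, and that Lemma~\ref{lemma:parallels}(d) then gives $x_0 \notin B_n$. This is exactly the skeleton of the paper's proof. However, the proposal then stalls: you sketch a computation of the latitude of $y_0$ via Gram-matrix bookkeeping and conclude with ``I would finish by verifying this strict inequality,'' without ever verifying it. As it stands this is a plan, not a proof, and the plan has a soft spot you wave away: you assert that $\langle x_1, R(e_n)\rangle$ is ``well-defined up to the irrelevant orthogonal complement choice,'' but $e_n$ generically has a nonzero component in $\mathrm{span}\{y_1,\dots,y_{n-1}\}^\perp$, on which $R$ is only determined up to sign, so $R(e_n)$ and hence the candidate latitude of $y_0$ genuinely depend on which extension of $R$ you took in Lemma~\ref{lemma:gram-matrix2}. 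That ambiguity is not fatal (the claim is true for both choices), but your argument does not address it.

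The paper sidesteps all of this with a choice-independent geometric observation that your proposal does not find. Consider the three affine hyperplanes $V_1=\aff\{y_0,\dots,y_{n-1}\}$, $V_2=\aff\{x_1,\dots,x_n\}$ and $V_3=\aff\{y_1,\dots,y_n\}$. Since $R$ sends $\{y_0,\dots,y_{n-1}\}$ to $\{x_1,\dots,x_n\}$ we have $R(V_1)=V_2$. If $y_0$ were in $V_3$, then $V_1$ and $V_3$ would be two hyperplanes through the same $(n-1)$ affinely independent points $y_0,y_1,\dots,y_{n-1}$ (or $y_1,\dots,y_{n-1}$ together with $y_0,y_n$), forcing $V_1=V_3$, hence $R(V_3)=V_2$. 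But $V_2\cap S^{n-1}=E_{n,\alpha}$ and $V_3\cap S^{n-1}=E_{n,\beta}$ are spherical caps of different radii ($\cos\alpha\ne\cos\beta$), and no element of $O(n)$ can carry one to the other. So $y_0\notin V_3\supset E_{n,\beta}$, and Lemma~\ref{lemma:parallels}(d) finishes. This argument requires no coordinates and no knowledge of which $R$ was picked; to make your version rigorous you would have to supply the explicit inequality you deferred, handle the sign ambiguity in $R$, and invert the norm identity from Lemma~\ref{lemma:parallels} for points off the sphere, none of which you do.
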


\begin{proof}
Set $y_0=\Psi(x_0)=R^{-1}(x_1)$. Consider the affine hyperplanes 
\begin{align*}
V_1 = \aff \{y_0,\dots,y_{n-1} \} \\
V_2 = \aff \{x_1,\dots,x_n\} \\
V_3 = \aff \{y_1,\dots,y_{n} \}
\end{align*}
Since $V_2 \cap S^{n-1} = E_{n,\alpha}$ and $V_3 \cap S^{n-1} = E_{n,\beta}$ with $\alpha < \beta$, 
no element $S \in O(n)$ can satisfy the relation $S(V_2)=V_3$. Since $R(V_1)=V_2$, this implies that $V_1 \neq V_3$ and thus $y_0 \not\in V_3$. It follows that $y_0 \in S^{n-1} \setminus E_{n,\beta}$ and therefore that $x_0 \not\in B_n$ by Lemma \ref{lemma:parallels}(d).
\end{proof}

We now show that the map $\Phi$ is primitive by proving that $\Phi^{n+1}$ is strictly positive. As in the proof of the previous section, we denote 
\[ A_k = C(\Phi^k) = S^{n-1} \cap \Phi^k(S^{n-1}) .\]
For $1 \leq k \leq n+1$, the point $x_k$ belongs to $A_{k-1}$ (since $\Phi^{-(k-1)}(x_k)=x_1 \in S^{n-1}$) but not to $A_k$ (since $\Phi^{-k}(x_k)=x_0 \not\in S^{n-1}$ by Lemma \ref{lemma:yn+1}). This shows that $A_{k-1} \neq A_k$. 
We have therefore a chain of strict inclusions
\[ A_{n+1} \subsetneq A_n  \subsetneq \dots \subsetneq A_1 \subsetneq A_0 = S^{n-1}  \]
and therefore as in the previous section (with the convention $\dim \emptyset = -1$)
\[ \dim \aff A_{n+1} < \dim \aff A_n < \dots < \dim \aff A_1 < \dim \aff A_0 = n. \]
This is only possible if $A_{n+1}=\emptyset$. It follows that $\Phi^{n+1}$ is strictly positive.

\section{Maximal exponents for qubit channels} \label{section:qubits}

We refer to \cite{ABMB} for terminology from quantum information theory used in this section.
Given an integer $n \geq 2$, let $\M_n$ be the algebra of $n \times n$ matrices with complex entries and $\M_n^+ \subset \M_n$ be the cone positive semidefinite matrices. The maximal exponent $\gamma(\M_n^+)$ involves a supremum over positive maps (or, more precisely, over $\M_n^+$-primitive maps). However in quantum information theory it is more natural to restrict the supremum to completely positive maps and to study the quantity
\begin{equation} \label{eq:gammaCP}
\gamma^{\mathrm{cp}}(\M_n^+) := \sup \{ \gamma(\M_n^+,\Phi) \st
\Phi : \M_n \to \M_n \textnormal{ completely positive and } \M_n^+\textnormal{-primitive}
\}.
\end{equation}
This quantity appears in \cite{SPWC10,Rahaman20,MS19} in the context of the \emph{quantum Wielandt inequality}. By Proposition \ref{prop:sup-over-soles}, since the cone $\M_n^+$ is homogeneous (i.e., all its soles are affinely isomorphic to the set of quantum states), one may restrict the supremum in~\eqref{eq:gammaCP} to quantum channels, i.e., to maps which are completely positive and trace-preserving.

One obviously has $\gamma^{\mathrm{cp}}(\M_n^+) \leq \gamma(\M_n^+)$. By restricting to diagonal matrices, one has $\gamma^{\mathrm{cp}}(\M_n^+) \geq \gamma(\R_+^n)=(n-1)^2+1$. The best known upper bound $\gamma^{\mathrm{cp}}(\M_n^+) \leq C n^2 \log n$ for some constant $C$ has been proved in \cite{MS19}. To our knowledge, the quantity $\gamma(\M_n^+)$ has not been studied in the literature.
As a byproduct of our study, we obtain the following result.

\begin{theorem}
We have $\gamma(\M_2^+) = 4$ and $\gamma^{\mathrm{cp}}(\M_2^+) = 3$.
\end{theorem}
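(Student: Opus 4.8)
The key observation is that the cone $\M_2^+$ of positive semidefinite $2\times 2$ matrices is, up to linear isomorphism, the Lorentz cone $\mathcal{L}_4$. Indeed, via the Pauli basis, a Hermitian matrix $M = \frac{1}{2}(tI + x\sigma_1 + y\sigma_2 + z\sigma_3)$ is positive semidefinite if and only if $t \geq \sqrt{x^2+y^2+z^2}$; this identification is a linear isomorphism of $\R^4$ carrying $\M_2^+$ onto $\mathcal{L}_4$. Since the maximal exponent is a linear invariant, Theorem~\ref{theorem:main} applied with $n=4$ gives $\gamma(\M_2^+) = \gamma(\mathcal{L}_4) = 4$. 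This handles the first equality, and establishes $\gamma^{\mathrm{cp}}(\M_2^+) \leq 4$ as well.

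To prove $\gamma^{\mathrm{cp}}(\M_2^+) = 3$, the plan is to establish both inequalities. For the lower bound $\gamma^{\mathrm{cp}}(\M_2^+) \geq 3$, I would exhibit a single qubit channel $\Phi$ (completely positive, trace-preserving, and $\M_2^+$-primitive) with $\gamma(\M_2^+,\Phi) = 3$; equivalently, $\Phi^2$ is not strictly positive but $\Phi^3$ is. A natural candidate is a channel whose action on the Bloch ball is an affine map sending the ball strictly inside itself after three steps but not two: for instance, combine a rotation with a contraction that collapses one Bloch axis while leaving a chain of two boundary points on the sphere before entering the interior. One can also try to import the $n=2$ instance of the construction from Section~\ref{section:lowerbound} (an affine self-map $\Phi$ of $B_2$ with $\gamma_{\mathrm{aff}}(B_2,\Phi)=3$) and check that it lifts to a bona fide qubit channel; this requires verifying complete positivity via the Choi matrix, which for qubit channels reduces to a concrete $4\times 4$ positivity check.

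For the upper bound $\gamma^{\mathrm{cp}}(\M_2^+) \leq 3$, the point is that a qubit channel cannot realize the extra iteration that a general positive map can. The reasoning of Section~\ref{section:upperbound} gives a chain of subspheres $A_{N} \subsetneq \dots \subsetneq A_0 = S^2$ of strictly decreasing affine dimension; to get $N \leq 3$ rather than $N \leq 4$ one must rule out the longest possible chain, i.e.\ the case where $\dim\aff A_k = 3-k$ for $k=0,1,2,3$. The obstruction is that the extremal construction forcing a length-$4$ chain produces an affine map whose linear part has, by Lemma~\ref{lemma:parallels}(c)-type rigidity, a spectrum that is incompatible with the extra positivity constraints imposed by complete positivity (the full Choi matrix of $\Phi$ must be positive, not merely the Bloch-ball picture). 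I would make this precise by analyzing the structure of a $\M_2^+$-primitive completely positive $\Phi$ near its Krein--Rutman fixed state: after normalizing, $\Phi$ fixes a faithful state, and complete positivity plus trace preservation constrains the Bloch contraction to be, up to rotations, a diagonal map with singular values $(s_1,s_2,s_3)$ satisfying the known qubit-channel inequalities (e.g.\ the Algoet--Fujiwara conditions). The main obstacle — and the part requiring genuine work — is showing that these complete-positivity constraints force the chain of subspheres to have length at most $3$; I expect this to come down to showing that a completely positive qubit channel whose Bloch map degenerates along a flag of length $4$ must already be strictly positive at step $3$, i.e.\ that the worst case in the purely affine/Lorentz setting is not completely positive.
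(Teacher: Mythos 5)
Your argument for $\gamma(\M_2^+)=4$ is exactly the paper's: identify $\M_2^+$ with $\mathcal{L}_4$ via the Pauli/Bloch correspondence and invoke Theorem~\ref{theorem:main}. That part is fine.

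For $\gamma^{\mathrm{cp}}(\M_2^+)=3$, however, both halves of your plan have genuine gaps. For the upper bound you correctly identify what needs to be ruled out — a subsphere chain in $S^2$ of maximal length $3 \to 2 \to 1 \to 0 \to \emptyset$ in affine dimension — but you do not supply the actual obstruction. The paper's key ingredient is the \emph{no-pancake theorem} (Theorem~IV.9 of the cited BGNPZ14): the set $C(\Phi)$ of pure input states with pure output under a qubit channel can never be a full circle in the Bloch sphere, hence consists of at most two points. This immediately forces $\dim\aff(A_1)\leq 1$ and collapses the chain, giving $N\leq 3$. Your proposal gestures at Algoet--Fujiwara-type constraints and the Choi matrix and says the ``main obstacle'' is ``genuine work'' you have not done; that work is precisely the no-pancake input, which your sketch neither names nor replaces with any concrete argument. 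Without it, the upper bound is unproved.

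For the lower bound your proposal lists candidate channels (``combine a rotation with a contraction,'' or ``try to import the $n=2$ instance from Section~\ref{section:lowerbound} and check the Choi matrix'') without verifying any of them. Note that importing the $n=2$ affine construction wholesale will not obviously work: that construction lives on $B_2$, whereas a qubit channel's Bloch map acts on $B_3$, and the crucial constraint is that the affine self-map of $B_3$ must actually be the Bloch map of a completely positive trace-preserving map, which is strictly stronger than being a positive affine contraction of the ball. The paper instead constructs an explicit channel from two Kraus operators $A=\operatorname{diag}(\cos\alpha,\cos\beta)$ and $B=\begin{psmallmatrix}0&\sin\beta\\\sin\alpha&0\end{psmallmatrix}$, identifies the exactly two pure states with pure output, and then conjugates by a unitary chosen via Lemma~\ref{lemma:generic-unitary} so that those two pure states form a chain of length three on the boundary; complete positivity and trace preservation are built in from the start. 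Your proposal contains no such verified construction, so the inequality $\gamma^{\mathrm{cp}}(\M_2^+)\geq 3$ is also left open.
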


\begin{proof}
Since the cones $\M_2^+$ and $\mathcal{L}_4$ are isomorphic, the fact that $\gamma(\M_2^+)=4$ is an immediate consequence of Theorem \ref{theorem:main}.
We now explain the inequality $\gamma^{\mathrm{cp}}(\M_2^+) \leq 3$. Let $\Phi : \M_2 \to \M_2$ be a quantum channel which is $\M_2^+$-primitive. As in \eqref{eq:def-CPhi}, let $C(\Phi)$ be the set of pure states whose image under $\Phi$ is pure. A result known as the no-pancake theorem asserts that $C(\Phi)$ cannot be a circle inside the Bloch ball (see \cite[Theorem IV.9]{BGNPZ14} for a precise statement), and therefore contains at most two points. Repeating the argument from Section \ref{section:upperbound} with this extra information gives the bound $\gamma(\M_2^+,\Phi) \leq 3$.

Finally, we construct a quantum channel $\Phi$ such that $\gamma(\M_2^+,\Phi)=3$ by adapting the arguments from Section \ref{section:lowerbound}.
Given $\alpha$ and $\beta$ in $(0,\pi/2)$ such that $\alpha \neq \beta$, consider the matrices 
\[ A = \begin{pmatrix} \cos \alpha & 0 \\ 0 & \cos \beta \end{pmatrix}, \ \
B = \begin{pmatrix} 0 & \sin \beta \\ \sin \alpha & 0 \end{pmatrix},
\]
and the quantum channel $\Psi : \M_2 \to \M_2$ defined by 
$\Psi(X) = AXA^* + BXB^*$. 

Define $\theta \in (0,\pi/2)$ by the relation 
$\tan \theta = \sqrt{\sin 2\alpha/\sin 2\beta}$ and consider the vectors $\psi_+$ and $\psi_-$ in $\C^2$ defined as $\psi_{\pm} = ( \cos \theta, \pm \sin \theta)$. We claim that the states $\rho_+$ and $\rho_-$ defined as $\rho_\pm = \ketbra{\psi_\pm}{\psi_\pm}$ are the only states whose image under $\Psi$ is pure. Indeed, given a unit vector $\psi \in \C^2$, the state $\Psi(\ketbra{\psi}{\psi})$ is pure if and only if the vectors $A\ket{\psi}$ and $B\ket{\psi}$ are proportional. Our claim then follows from elementary computations.

The corresponding output states are $\Psi(\rho_\pm) = \ketbra{\phi_\pm}{\phi_\pm}$, where $\phi_{\pm} = (\cos \delta,\pm \sin \delta)$ and $\delta \in (0,\pi/2)$ is defined by the relation $\tan \delta = \sqrt{\tan \alpha/\tan \beta}$. Since $\alpha \neq \beta$, we have $\delta \neq \pi/4$ and therefore $0 < |\scalar{\phi_+}{\phi_-}| < 1$. We now use an elementary lemma. 

\begin{lemma} \label{lemma:generic-unitary}
Let $\phi_+,\phi_-,\psi_+,\psi_-$ be unit vectors in $\C^2$ such that $0 < |\scalar{\phi_+}{\phi_-}| < 1$. Then there exists a unitary matrix $U$ such that $U(\phi_+)=\psi_-$ and $U(\phi_-)$ is neither proportional to $\psi_+$ nor to $\psi_-$.
\end{lemma}

\begin{proof}
Write $\phi_- = a \phi_+ +b \chi$ where $\chi$ is a unit vector orthogonal to $\phi_+$ and $a,b$ are complex numbers such that $|a|^2+|b|^2=1$. Pick a unit vector $\omega$ orthogonal to $\psi_-$. Since $a$ and $b$ are nonzero, we may choose $\theta \in \R$ such that $a \psi_- + be^{i\theta} \omega$ is neither proportional to $\psi_+$ nor to~$\psi_-$. The unitary matrix sending the basis $(\phi_+,\chi)$ to the basis $(\psi_-,e^{i\theta}\omega)$ has the desired property.
\end{proof}

Let $U$ be a unitary matrix given by the lemma and consider the quantum channel $\Phi$ defined by $\Phi(X)= U\Psi(X)U^*$. The only states with a pure output under $\Phi$ are $\rho_+$ and~$\rho_-$. Moreover, $\Phi(\rho_+)= U \ketbra{\phi_+}{\phi_+} U^* = \rho_-$ and $\Phi(\rho_-)= U \ketbra{\phi_-}{\phi_-} U^*$ is  a pure state which, by Lemma \ref{lemma:generic-unitary}, is distinct from $\rho_+$ and $\rho_-$. It follows that $\Phi^2(\rho_-)=\Phi^3(\rho_+)$ is not pure. Since $\Phi^3$ is strictly positive and $\Phi^2$ is not, the channel $\Phi$ has a maximal index equal to~$3$.
\end{proof}

\bibliographystyle{plain}
\bibliography{reference}{}

\end{document}